\newtheorem{Theorem}{Theorem}[section]
\newtheorem{theorem}[Theorem]{Theorem}
\newtheorem{proposition}[Theorem]{Proposition}
\newtheorem{corollary}[Theorem]{Corollary}
\newtheorem{Lemma}[Theorem]{Lemma}
\newtheorem{lemma}[Theorem]{Lemma}
\newtheorem{fact}[Theorem]{Fact}
\theoremstyle{definition}
\newtheorem{example}[Theorem]{Example}
\newtheorem{remark}[Theorem]{Remark}
\newtheorem{remark/def}[Theorem]{Remark/Definition}
\newtheorem{definition}[Theorem]{Definition}
\newtheorem{notation}[Theorem]{Notation}
\newtheorem*{notationn}{Notation}
\newsavebox{\indbin}
\savebox{\indbin}{\begin{picture}(0,0)
\newlength{\gnu}
\settowidth{\gnu}{$\smile$} \setlength{\unitlength}{.5\gnu}
\put(-1,-.65){$\smile$} \put(-.25,.1){$|$}
\end{picture}}
\def \indo {\mathop{\smile \hskip -0.9em ^| \ }}
\newcommand{\be}{\begin{enumerate}}
\newcommand{\bd}{\begin{defn}}
\newcommand{\bt}{\begin{theorem}}
\newcommand{\bl}{\begin{lemma}}
\newcommand{\ee}{\end{enumerate}}
\newcommand{\ed}{\end{defn}}
\newcommand{\et}{\end{theorem}}
\newcommand{\el}{\end{lemma}}
\newcommand{\la}{\langle}
\newcommand{\ra}{\rangle}
\newcommand{\CP}{{\mathcal P}}
\newcommand{\CA}{{\mathcal A}}
\newcommand{\CC}{{\mathcal C}}
\newcommand{\CM}{{\mathcal M}}
\newcommand{\dom}{\mbox{dom}}
\newcommand{\sdist}{\operatorname{Sd\:}}
\newcommand{\bsdist}{\operatorname{\widehat{S}d\:}}
\newcommand{\Th}{\operatorname{Th}}
\def\eq{\operatorname{eq}}
\def\heq{\operatorname{heq}}
\def\dcl{\operatorname{dcl}}
\def\dom{\operatorname{dom}}
\def\acl{\operatorname{acl}}
\def\ker{\operatorname{Ker}}
\def\Im{\operatorname{Im}}
\def\tp{\operatorname{tp}}
\def\ltp{\operatorname{Ltp}}
\def\Sym{\operatorname{Sym}}
\def\raw{\rightarrow}
\def\ob{\operatorname{Ob}}
\def\mor{\operatorname{Mor}}
\def\supp{\operatorname{supp}}
\def\Bd{\partial}
\def\sign{\operatorname{sign}}
\title{A classification of 2-chains having 1-shell boundaries in rosy theories}
\author{Byunghan Kim}
\author{SunYoung Kim}
\author{Junguk Lee}
\address{Department of Mathematics\\ Yonsei University\\
50 Yonsei-Ro, Seodaemun-Gu\\
Seoul 120-749, Korea}
\email{bkim@yonsei.ac.kr}
\email{sy831@yonsei.ac.kr}
\email{ljw@yonsei.ac.kr}
\thanks{The  first author was supported by NRF of Korea grants 2011-0021916 and 2013R1A1A2073702.
The second and third authors were supported by NRF of Korea grants 2012-030479 and 2012-044239.
The third author was supported by European Community's Seventh Framework Programme [FP7/2007-2013] grant 238381.}
\begin{document}

\begin{abstract}
We classify, in a non-trivial amenable collection of functors, all 2-chains up to the relation of having the same  1-shell  boundary. In particular, we prove that in a rosy theory, every 1-shell of a Lascar strong type
is the boundary of some 2-chain, hence making  the 1st homology group
trivial.

We also show that, unlike in simple theories, in rosy theories there
is no upper bound on the minimal lengths of $2$-chains whose boundary
is a $1$-shell.
\end{abstract}

\maketitle

\section{Introduction}

In \cite{GKK2},\cite{GKK3}, J. Goodrick, A. Kolesnikov and the first author developed a homology theory for  any amenable collection of
functors in a very general context. But the most interesting examples
appear in model theory.  Namely, given any strong type $p\in S(A)$ in
a rosy theory $T$, we may assign a non-trivial amenable collection of
functors preserving thorn-independence and compute the corresponding
homology groups. By the general theory, if $T$ has $n$-complete
amalgamation ($n\geq 2$) over $A=\acl(A)$  then the $(n-1)$-th
homology group of $p\in S(A)$ consists of $(n-1)$-shells with the
support $n+1= \{0, \ldots, n \}$. Hence, in any simple $T$ (where, due
to $3$-amalgamation, every $1$-shell is the boundnary of some
$2$-simplex), the 1st homology group is trivial. But the question
remained whether the same would  hold in rosy theories. In this paper,
we show that the answer  is \emph{yes} (as long as $p$ is a Lascar
type). A crucial ingredient in our proof is the fact that $a$ and $b$
realize the same Lascar type if and only if their Lascar distance  is
finite, i.e., $d_A(a, b) <\omega$. In the proof, the number of 2-simplices involved in a 2-chain having the 1-shell boundary is proportional to
$d_A(a,b)$. Therefore one may guess that, there does not exist a uniform bound
for the minimal lengths of 2-chains with 1-shell boundaries for various Lascar types in rosy theories, contrary to
the case of simple theories where the bound is 1, due to 3-amalgamation. A series of rosy examples in \cite{CLPZ}  where the Lascar distances increase are candidates.
However  in order to confirm that in each example that a candidate 2-chain
has the expected  minimal length, we need to rule out all other possibilities. For this goal we start to classify all the 2-chains having the same 1-shell boundary in a very general amenable context. The classification also has its own research interests. We obtain some interesting and important results in regard to the classification.

There are basically two operations on the class of 2-chains preserving the length  and boundary of a chain. The first one is called the crossing (CR-)operation and the second one is called the renaming-of-support (RS-)operation. Two 2-chains are said to be equivalent if one is obtained from the other by finitely many applications of the two operations. 


In the remainder of this section,
we recall the definitions of an amenable family of functors and the corresponding homology groups
introduced in \cite{GKK2},\cite{GKK3}. We thank Hyeung-Joon Kim and John Goodrick for their valuable suggestions and  comments.
\begin{notationn} 
Throughout the paper, $s$ denotes an arbitrary finite set of natural numbers. Given any subset $X\subseteq \CP(s)$, we may view $X$ as a category where for any $u, v \in X$, $\mor(u,v)$ consists of a single morphism $\iota_{u,v}$ if $u\subseteq v$, and $\mor(u,v)=\emptyset$ otherwise. If $f\colon X \raw \CC$ is any functor into some category $\CC$ then for any $u, v\in X$ with $u \subseteq v$, we let $f^u_v$ denote the morphism $f(\iota_{u,v})\in \mor_{\CC}(f(u),f(v))$. 
We shall call $X\subseteq \CP(s)$ a \emph{primitive category} if $X$ is non-empty and  \emph{downward closed}, i.e.,  for any $u, v\in \CP(s)$,  if $u\subseteq v$ and $v\in X$ then $u\in X$. (Note that all primitive categories have  the empty set $\emptyset\subset \omega$ as an object.)
\end{notationn}

\begin{remark/def}
Given any primitive categories $X$ and $Y$, define
\[ X+Y: = \{ t\cup k \mid t\in X,\, k\in Y \}
\]
which is clearly a primitive category itself containing $X$ and $Y$ as subcategories. And, for any $t\in X$, define
\[ X_t:= \{ k\in X \mid t\cap k = \emptyset \}\quad \mbox{and} \quad X|_t:=\{ k\in X_t\mid t\cup k \in X \}
\]
both of which are clearly primitive subcategories of $X$. Observe:
\be
\item $X|_t \subseteq X_t \subseteq X$
\item $X\subseteq X_t + \CP(t)$
\item $X|_t = \bigcup \{ \CP(u\setminus t) \mid t\subseteq u \in X \}$.
\ee

\smallskip

\noindent Moreover, it is easy to check that the following are
equivalent:\\
 $X = X_t + \CP(t)$ $\iff$ $X_t = X|_t$ $\iff$ $X = \bigcup \{ \CP(u)\mid t\subseteq u \in X \}$.

\smallskip

\noindent 
If one of these equivalent conditions holds, we shall say that  $X$  \emph{splits at $t$}.

\smallskip

\noindent For any functor $f\colon X \raw \CC$ to some category $\CC$ and for any $t\in X$, the \emph{localization of $f$ at $t$} is the functor $f|_t\colon X|_t \raw \CC$ defined as follows: for any $u\subseteq v\in X|_t$,  $(f|_t)^u_v  = f^{u\cup t}_{v\cup t}$ and $f|_t (v) = f(t\cup v)$.
\end{remark/def}

\begin{definition}\label{isoper}
Let $X\subseteq \CP(s)$ and $Y\subseteq \CP(t)$ be any primitive categories (where $s, t$ are some finite sets of natural numbers). And let  $f\colon X \raw \CC$ and $g\colon Y \raw \CC$ be any functors to some category $\CC$.
\be
	\item  We say that $f$ and $g$ are {\em isomorphic} if there is  an order-preserving bijection $\tau \colon s \raw t$ such that  $Y=\{ \tau(u)\mid u\in X \}$ and there is a family of isomorphisms $\{h_u \colon f(u)\rightarrow g(\tau(u))\mid u\in X\} \subseteq \mor(\CC)$ such that, for any $u\subseteq v\in X$,
        \begin{center}
            $h_v \circ f^u_v =
            g^{\tau(u)}_{\tau(v)} \circ h_u$.
        \end{center}
 \item \label{loc.per} We say that $f$ and $g$ are {\em permutations of each other} if there is a bijection $\sigma \colon s \raw t$ (not necessarily order-preserving) such that $Y=\{\sigma(u) \mid  u \in X\}$ and, for any  $u\subseteq v\in Y$, $g(v)=f(\sigma^{-1}(v))$ and $(g)^u_v=f^{\sigma^{-1}(u)}_{\sigma^{-1}(v)}$. In this case, we write $g=f\circ\sigma^{-1}$.
\ee
\end{definition}

Note that, if $f$ and $g$ are permutations of each other via an \emph{order-preserving} map $\sigma\colon s \raw t$, then $f$ and $g$ are isomorphic.

\begin{definition}
Let $\CA$ be a non-empty family of functors from various primitive categories into some fixed category $\CC$. We say that $\CA$ is \emph{amenable}  if it satisfies the following properties:
\be
\item (Closed under isomorphism and permutation)  If  $f\in \CA$ then any functor $g$ which is isomorphic to $f$ or is a permutation of $f$ also belongs to $\CA$.
\item (Closed under restriction and union) For any functor $f\colon X\raw \CC$ from some primitive category $X$ into $\CC$,
\[ f\in \CA \ \Leftrightarrow\ \mbox{for every $t\in X$}, f\upharpoonright\CP(t)\in \CA.
\]
\item (Closed under localization) If $f\colon X \raw \CC$ is any functor in $\CA$ then for every $t\in X$, $f|_t \colon X|_t \raw \CC$ is also in $\CA$.
\item (Extensions of localizations are  localizations of extensions) Let  $f\colon X \raw \CC$ be any functor in $\CA$ which splits at some $t\in X$. Then whenever $f|_t$ can be extended to some functor $g\colon Z \raw \CC$ in $\CA$ where $t\cap \bigcup Z = \emptyset$, $f$ can be extended to some functor $h\colon \CP(t) +Z \raw \CC$ in $\CA$ such that $h|_t = g$.
\ee
\end{definition}

\begin{definition}
By a ({\em regular}) $n$-{\em simplex} in a category $\CC$, we mean a functor $f\colon \CP(s) \raw \CC$ where $s\subseteq \omega$ has the size $n+1$. We call $s$ the {\em support} of $f$ and denote it by $\supp(f)$.
\end{definition}

\begin{definition}
Let $\CA$ be an amenable family of functors into some category $\CC$. Let $B\in \ob(\CC)$. If $f$ is a functor in $\CA$ such that $f(\emptyset) = B$,  we shall say that $f$ is \emph{over $B$}. And we define:
\begin{align*}
&S_n(\CA; B):= \{\, f\in \CA \mid \mbox{$f$ is a regular $n$-simplex  over $B$}\, \}\\
&C_n(\CA; B): =  \mbox{the free abelian group generated by $S_n(\CA; B)$.}
\end{align*}
The elements of $C_n(\CA; B)$ are called the \emph{$n$-chains over $B$ in $\CA$}. For each $i=0, \ldots,  n$, we  define a group homomorphism
\[ \Bd^i_n\colon C_n(\CA; B) \raw C_{n-1}(\CA; B)
\]
by  letting, for any $n$-simplex   $f\colon \CP(s) \raw \CC$ in $S_n(\CA; B)$ where  $s = \{ s_0 <\cdots < s_n \}$, 
\[ \Bd_n^i(f):= f\upharpoonright \CP(s\setminus\{s_i\})
\]
and then extending linearly to all $n$-chains in $C_n(\CA; B)$. Then we define  the \emph{boundary map}
\[ \Bd_n\colon C_n(\CA; B)\raw C_{n-1}(\CA; B)
\]
by
\[ \Bd_n(c):=\sum_{0\leq i\leq n}(-1)^i \Bd_n^i(c).
\]
We shall often refer to $\Bd_n(c)$ as the \emph{boundary of $c$}. Next, we define:
\begin{align*}
&Z_n(\CA; B):= \ker \, \Bd_n\\
& B_n(\CA; B):= \Im \, \Bd_{n+1}.
\end{align*}

\noindent The elements of $Z_n(\CA; B)$ and $B_n(\CA; B)$ are called \emph{$n$-cycles} and \emph{$n$-boundaries}, respectively. It is straightforward to check
\[ \Bd_{n-1}\circ \Bd_{n} =0.
\]
Hence we may define
\[ H_n(\CA; B):= Z_n(\CA; B)/B_n(\CA; B)
\]
called the \emph{$n$}-th \emph{(simplicial) homology group of $\CA$ over $B$}.
\end{definition}

\begin{notation}
\be
\item 
For $c\in Z_n(\CA; B)$, $[c]$ denotes  the coset of $B_n(\CA; B)$ containing $c$.
\item When $n$ is clear from context, we shall often omit $n$ from $\Bd^i_n$ and $\Bd_n$, writing simply as  $\Bd^i$ and $\Bd$.
\item When we write an $n$-chain $c\in C_n(\CA; B)$ as
\[c = \sum_{i=1}^kn_if_i
\]
we shall assume, unless stated otherwise, that $n_i$'s are nonzero
integers and $f_i$'s are distinct $n$-simplices. (This form is called
the \emph{standard form} of a chain.) For such an $n$-chain $c$, we
define the \emph{length} of $c$ and the \emph{support} of $c$ as $|c| : = \sum_{i=1}^k|n_i|$ and
$\supp(c):= \bigcup_{i=1}^k \{\supp(f_i) \}$, respectively.

\item For $c, d\in C_n(\CA; B)$, we say that $d$ is a \emph{subchain (or subsummand) of $c$} if they are in the standard forms
\[c = \sum_{i=1}^kn_if_i\qquad \mbox{and}\qquad d = \sum_{i\in
  J}m_if_i,\]
where $J\subseteq \{ 1, \ldots, k \}$ and, for each $i\in J$,
$n_i\cdot m_i>0$ and $ |m_i|\leq |n_i|$.
\ee
\end{notation}


\begin{remark/def}\label{subsum} 
Let $c$ be any $n$-chain and let $d$ be a subsummand of $c$. For any $n$-chain $d'$,  we shall say that the $n$-chain
\[ c':= c - d + d'
\]
is  obtained  by \emph{replacing the subsummand $d$ in $c$ by $d'$}. Note that, if $|d'|\leq |d|$ then  $|c'|\leq |c|$.
\end{remark/def}

\begin{remark/def}\label{sigma*}
Given any bijection $\sigma \colon \omega \raw \omega$ (not
necessarily order-preserving), we may induce an automorphism
$\sigma^*_n \colon C_n(\CA; B) \raw C_{n}(\CA; B)$
 for each $n$ as follows: for any $n$-chain $c=\sum_i n_if_i\in
 C_n(\CA,B)$, where each $f_i$ is an $n$-simplex with
$s_i:=\supp(f_i)=\{s_{i,0}<\cdots<s_{i,n}\}$, we let
$\sigma_i:=\sigma\restriction s_i$ and
$t_i:=\sigma_i(s_i)=\{t_{i,0}<\cdots<t_{i,n} \}$. 
 We define $$\sigma^*(c):=\sum_i n_i|\sigma_i|f_i\circ \sigma_i^{-1}$$
 (see Definition \ref{isoper}(\ref{loc.per})) with  $|\sigma_i|:=\sign
 (\sigma'_i)$ ($=\:\pm 1$) where
 $\sigma'_i\in \Sym(n+1)$ such that  for $j\leq n$, $\sigma_i(s_{i,j})=t_{i,\sigma'_i(j)}$.
For example $$\sigma^*(f_i)={|\sigma_i|}f_i\circ \sigma_i^{-1}.$$
Moreover,  $\sigma^*$ commutes with the boundary map, i.e., $\Bd\circ \sigma^* = \sigma^*\circ \Bd$.  This can be verified inductively by first checking the case where $\sigma$ is a transposition.
\end{remark/def}

Next we define the amalgamation properties. For $n=\{0,\ldots,n-1\}$, we let $\CP^-(n):=\CP(n)\setminus \{n\}$. i.e., $\CP^-(n)$ is the set of all the \emph{proper} subsets of $n$.

\begin{definition}
Let $\CA$ be an amenable family of functors into a category $\CC$.
\be
\item $\CA$ has \emph{$n$-amalgamation} ($n\geq 1$) if every functor $f\colon \CP^-(n) \raw \CC$ in $\CA$ can be extended to some functor $g\colon \CP(n)\raw \CC$ in $\CA$.
\item $\CA$ has \emph{$n$-complete amalgamation} (written $n$-CA) if it has $k$-amalgamation for every $1\leq k\leq n$.
\item $\CA$ has \emph{strong $2$-amalgamation} if, whenever $f\colon \CP(s) \raw \CC$ and $g\colon \CP(t)\raw \CC$ are simplices in  $\CA$ which agree on $\CP(s\cap t)$, then there exists some simplex $h\colon \CP(s\cup t) \raw \CC$ in $\CA$ extending both $f$ and $g$.
\ee
\end{definition}

\begin{remark}
It is easy to verify that, for any amenable family $\CA$:
\be
\item strong $2$-amalgamation $\, \Rightarrow\, 2$-amalgamation.
\item ($1$-amalgamation  $+$ strong $2$-amalgamation) $\, \Rightarrow\, \CA$ has  $n$-simplices for every $n\geq 0$.
\ee
\end{remark}

\begin{definition}
An amenable family of functors  is called \emph{non-trivial} if it  has $1$-amalgamation and strong $2$-amalgamation (in particular, it has  $2$-CA).
\end{definition}

\begin{definition}
An $n$-chain  $c\in C_n(\CA; B)$ is called an \emph{$n$-shell} if it is in the form 
\[c =\pm \sum_{0\leq i\leq n+1}\limits (-1)^i f_i
\]
where $f_i$'s are $n$-simplices satisfying
\[ \Bd^i f_j = \Bd^{j-1} f_i \quad \mbox{whenever $0\leq i < j \leq n+1$.}
\]
We define $E_n(\CA; B):= \{\, c\in C_n(\CA; B) \mid c \ \mbox{is an $n$-shell}\ \}$.
\end{definition}

\medskip

It is straightforward to verify the  following proposition.

\begin{proposition}\label{shellremark}
\be
\item $E_n(\CA; B) \subset Z_n(\CA; B)$.
\smallskip

\item For every $f\in S_n(\CA; B)$, $\Bd_n(f) \in E_{n-1}(\CA; B)$.

\smallskip
\item If $ c= \pm \sum_{0\leq i\leq n+1}\limits (-1)^i f_i$ is any $n$-shell,
  then  $|\supp(c) | = n+2$.
Moreover, there exists a unique functor $g\colon \CP^-(\supp(c)) \raw
\CC$ in $\CA$ extending all the  $f_i$'s. More precisely,  if we let $ \supp(c) = \{ s_0 <\cdots < s_{n+1}\}$,
then $g\restriction \CP(\supp(c)\setminus \{ s_i \}) = f_i$ 
for each $i$.

\smallskip

\item $\CA$ has $(n+2)$-amalgamation if and only if for any $n$-shell $c$, there exists some $(n+1)$-simplex $d$ such $c = \pm\Bd(d)$.
\ee
\end{proposition}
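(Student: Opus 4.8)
The plan is to dispose of parts~(1) and~(2) by the standard simplicial bookkeeping, and to build parts~(3) and~(4) on top of that. For~(1), write the $n$-shell as $c=\pm\sum_{i=0}^{n+1}(-1)^if_i$ and expand $\Bd c=\pm\sum_{i,k}(-1)^{i+k}\Bd^kf_i$; one checks that the index set $\{(i,k):0\le i\le n+1,\,0\le k\le n\}$ is partitioned by the pairs $\{(j,i),(i,j-1)\}$ over $0\le i<j\le n+1$, and each such pair contributes $(-1)^{i+j}(\Bd^if_j-\Bd^{j-1}f_i)=0$ by the shell relations, so $\Bd c=0$. For~(2), with $f\colon\CP(s)\raw\CC$ and $s=\{s_0<\dots<s_n\}$, put $g_i:=\Bd^if=f\restriction\CP(s\setminus\{s_i\})$; these lie in $\CA$ over $B$ by closure under restriction, and for $0\le i<j\le n$ both $\Bd^ig_j$ and $\Bd^{j-1}g_i$ equal $f\restriction\CP(s\setminus\{s_i,s_j\})$ (the $i$-th element of $s\setminus\{s_j\}$ is $s_i$, and the $(j-1)$-th element of $s\setminus\{s_i\}$ is $s_j$), so the $g_i$ form an $(n-1)$-shell. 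The same computation yields the identity $\Bd^a\Bd^b=\Bd^{b-1}\Bd^a$ for $a<b$, which is needed below.

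The heart is part~(3). Writing $T(i)$ for the $i$-th smallest element of a finite $T$, I would show that for an $n$-shell $c=\pm\sum_{i=0}^{n+1}(-1)^if_i$ (with $n\ge1$) there is an $(n+2)$-element set $T$ with $\supp(f_i)=T\setminus\{T(i)\}$ for all $i$; since $\bigcup_i(T\setminus\{T(i)\})=T$, this gives $|\supp(c)|=n+2$ at once. This I would prove by induction on $n$, the case $n=1$ being a quick direct computation from the three shell relations. For the inductive step, the identity from~(2) shows that $c':=\pm\sum_{i=0}^n(-1)^i\Bd^nf_i$ is an $(n-1)$-shell, so the inductive hypothesis yields an $(n+1)$-element $T'$ with $\supp(f_i)\setminus\{\max\supp(f_i)\}=T'\setminus\{T'(i)\}$ for $i\le n$. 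Substituting this back into the remaining shell relations $\Bd^if_j=\Bd^{j-1}f_i$ — first those involving $f_{n+1}$ to force $\supp(f_{n+1})=T'$, then those among $f_0,\dots,f_n$ to force all the values $\max\supp(f_i)$ ($i\le n$) to coincide in a single $M>\max T'$ — one gets the claim with $T:=T'\cup\{M\}$.

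Granting the structure of the supports, the extending functor $g\colon\CP^-(\supp(c))\raw\CC$ is obtained by gluing: for a proper $w\subsetneq T$ choose some $i$ with $w\subseteq T\setminus\{T(i)\}$ (one exists since $w$ omits some element of $T$) and set $g(w):=f_i(w)$, and likewise on morphisms. Well-definedness is exactly the shell relation, which by the element count in~(2) says $f_i$ and $f_j$ agree on $\CP(T\setminus\{T(i),T(j)\})$; $g$ is a functor because each $f_i$ is; $g\in\CA$ by the restriction/union axiom, since each $g\restriction\CP(t)$ for $t\in\CP^-(T)$ equals some $f_i\restriction\CP(t)\in\CA$; and uniqueness is clear because any extension must restrict to $f_i$ on $\CP(T\setminus\{T(i)\})$. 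Part~(4) then follows: if $\CA$ has $(n+2)$-amalgamation, apply it (after renaming $\supp(c)$ order-preservingly to $\{0,\dots,n+1\}$ and using closure under isomorphism) to extend $g$ to an $(n+1)$-simplex $d\colon\CP(T)\raw\CC$ in $\CA$, and then $\Bd^id=g\restriction\CP(T\setminus\{T(i)\})=f_i$ gives $\Bd d=\pm c$. Conversely, given $f\colon\CP^-(n+2)\raw\CC$ in $\CA$, its faces $f_i:=f\restriction\CP((n+2)\setminus\{i\})$ form an $n$-shell by the bookkeeping of~(2), so by hypothesis $\pm c=\Bd d$ for an $(n+1)$-simplex $d$; comparing supports forces $\supp(d)=\{0,\dots,n+1\}$, and since the faces of $d$ have pairwise distinct supports, comparing the standard forms of $\Bd d$ and $\pm\sum_i(-1)^if_i$ rules out the minus sign and yields $\Bd^id=f_i$ for all $i$, that is, $d$ extends $f$; so $\CA$ has $(n+2)$-amalgamation.

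I expect the only real obstacle to be the support computation in~(3). The consecutive shell relations by themselves only show that adjacent supports $\supp(f_i),\supp(f_{i+1})$ differ in exactly one position, which is not enough — sets that pairwise share $n$ elements can still have a union strictly larger than $n+2$ — so every shell relation must be brought in and the positions tracked carefully; the reduction to the top-face shell $c'$ above is what keeps this manageable.
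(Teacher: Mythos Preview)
The paper itself gives no proof of this proposition, only the remark that it is ``straightforward to verify''; your proposal supplies exactly that verification, and it is correct. Parts~(1), (2), and~(4) are handled by the standard simplicial identities together with the amenability axioms (closure under isomorphism, restriction/union) in the way one would expect.

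Your treatment of part~(3) is the one place where some care is genuinely needed, and your induction on~$n$ via the top-face shell $c'=\pm\sum_{i\le n}(-1)^i\Bd^n f_i$ is a clean way to organize it. The two substeps --- reading off $\supp(f_{n+1})=T'$ from the relations $\Bd^i f_{n+1}=\Bd^n f_i$, and reading off $M_i=M_j$ from $\Bd^i f_j=\Bd^{j-1} f_i$ for $i<j\le n$ --- go through exactly as you outline, once one notes that $M_i$ is strictly larger than every element of $T'\setminus\{T'(i)\}$ (so no accidental coincidences occur). The gluing construction of $g$ and the sign argument in~(4) are likewise correct. One tiny caveat: your induction starts at $n=1$, which is where the proposition is actually used in the paper; the case $n=0$ is degenerate (the single shell relation is vacuous there) and the support claim can fail, but this is an ambiguity in the statement rather than a defect of your argument.
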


\begin{definition}
An amenable family  of functors has \emph{weak $3$-amalgamation} if each $1$-shell is the boundary of some $2$-chain $c$ with $|c|\leq 3$.
\end{definition}

The following result due to  \cite{GKK2}, \cite{GKK3} illustrates the importance of the notion of shell.

\begin{fact}\label{funfact}\cite{GKK2}\cite{GKK3}
Let $\CA$ be any non-trivial amenable family of functors. If $\CA$ has $(n+1)$-CA for some $n\geq 1$, then
\[ H_n(\CA; B) = \{ [c] \mid c \in E_n(\CA; B),\ \supp(c) = \{ 0, \ldots, n+1 \} \, \}.
\]
In particular,
\be
\item $H_1(\CA; B) = 0\, \Leftrightarrow\ E_1(\CA; B)\subset B_1(\CA; B)$
\smallskip

\item If $\CA$  has weak $3$-amalgamation then $H_1(\CA; B) = 0$.
\ee
\end{fact}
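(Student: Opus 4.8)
The inclusion ``$\supseteq$'' requires no hypothesis: by Proposition~\ref{shellremark}(1) every $n$-shell is an $n$-cycle, so $[c]\in H_n(\CA;B)$ for each $c\in E_n(\CA;B)$. All the content lies in the reverse inclusion, which I would phrase as: every $z\in Z_n(\CA;B)$ is homologous over $B$ to an $n$-shell with support exactly $\{0,\ldots,n+1\}$. The plan is a nested induction --- on the dimension $n$, and for fixed $n$ on the pair $(|\supp(z)|,\,|z|)$ ordered lexicographically. In the base situation $|\supp(z)|\le n+2$: writing $\supp(z)=\{t_0<\cdots<t_k\}$ with $k\le n+1$, every $n$-simplex occurring in $z$ has support among the $\le n+2$ sets of the form $\supp(z)\setminus\{t_i\}$, and a direct computation with the coefficients, forced by $\Bd z=0$, shows --- after using $(n+1)$-amalgamation to supply the finitely many missing $n$-simplex faces and the unique common extension furnished by Proposition~\ref{shellremark}(3) --- that $z$ is homologous to an $n$-shell; one further round of complete amalgamation relabels its support to $\{0,\ldots,n+1\}$.

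For the inductive step, assume $|\supp(z)|\ge n+3$. Here the idea is a \emph{partial cone}. Fix a fresh vertex $*$, note (using non-triviality of $\CA$ and Proposition~\ref{shellremark}(4)) that every $n$-simplex $f\colon\CP(s)\raw\CC$ in $\CA$ extends to an $(n+1)$-simplex on $\CP(s\cup\{*\})$, enumerate the simplices $f_1,\ldots,f_N$ of $z$, and build extensions $g_1,\ldots,g_N$ \emph{one at a time}, at each stage invoking $k$-amalgamation for $k\le n+1$ together with the unique-extension property of shells (Proposition~\ref{shellremark}(3)) to force $g_i$ to agree with the already-constructed $g_j$ on every shared face. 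Setting $d:=\sum_i\sigma_i n_i g_i$ for suitable signs $\sigma_i$, one gets $\Bd d=\pm z+w$, where $w$ is an $n$-cycle all of whose simplices contain $*$. Localizing at $\{*\}$ converts $w$ into an $(n-1)$-cycle, which by the dimension-induction hypothesis is homologous to an $(n-1)$-shell on a set of size $n+1$; coning that shell back up along $*$ shows $w$ is homologous to a cycle supported on $\le n+2$ elements, hence so is $z$. Since $n+2<|\supp(z)|$, this is strictly lower in the lexicographic order, and iterating brings us to the base case.

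The single real obstacle is the coherence of the extensions $g_i$, and this is exactly where the hypothesis $(n+1)$-CA is spent. One must \emph{not} hope to make all the $g_i$ simultaneously compatible: if one could, $d$ would exhibit $z$ as a boundary and force $H_n(\CA;B)=0$ --- the exact opposite of the phenomenon this paper studies. Complete amalgamation up to level $n+1$ buys precisely the right amount: the gluing data encountered when adjoining one new simplex to the partial cone already built always assembles into a shell of dimension at most $n$, whose unique filler exists by Proposition~\ref{shellremark}(3)--(4); but the leftover cycle $w$ genuinely survives, and the delicate part is to organize the enumeration and the localization so that $w$ is provably simpler than $z$ while keeping the supports controlled and eventually landing on $\{0,\ldots,n+1\}$. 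That bookkeeping is the technical heart of the argument.

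The two ``in particular'' clauses are then immediate. For (1): if $H_1(\CA;B)=0$ then $Z_1(\CA;B)=B_1(\CA;B)$, so $E_1(\CA;B)\subseteq Z_1(\CA;B)=B_1(\CA;B)$ by Proposition~\ref{shellremark}(1); conversely, if $E_1(\CA;B)\subseteq B_1(\CA;B)$ then by the displayed equality every element of $H_1(\CA;B)$ is $[c]$ for some $1$-shell $c$, and $c\in B_1(\CA;B)$ forces $[c]=0$, whence $H_1(\CA;B)=0$. For (2): weak $3$-amalgamation says exactly that each $1$-shell is the boundary of a $2$-chain, i.e.\ $E_1(\CA;B)\subseteq B_1(\CA;B)$, so $H_1(\CA;B)=0$ by (1).
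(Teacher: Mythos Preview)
The paper does not contain a proof of this statement: it is stated as a \emph{Fact} and attributed to \cite{GKK2},\cite{GKK3}. There is therefore nothing in the present paper to compare your argument against; the result is imported wholesale from the Goodrick--Kim--Kolesnikov papers, and the authors only use the $n=1$ case (for which the hypothesis $(n+1)$-CA $=2$-CA is automatic in a non-trivial amenable family).

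Your derivations of the ``in particular'' clauses (1) and (2) from the displayed equality are correct and are exactly the intended corollaries. Your outline for the displayed equality itself is a reasonable strategy --- reduce an arbitrary cycle to one of small support by a cone-and-localize step, then analyze the small-support case directly --- and is in the spirit of the prism/cone arguments used in \cite{GKK2}. But as written it is a sketch, not a proof: you yourself flag that the coherence of the partial cone and the bookkeeping showing $w$ is strictly simpler than $z$ are ``the technical heart,'' and you do not carry them out. In particular, the base case $|\supp(z)|\le n+2$ is asserted rather than argued (why is an arbitrary $n$-cycle on $n+2$ vertices homologous to a single shell, not merely a $\BZ$-combination of shells?), and in the inductive step the claim that the localized cycle $w|_{\{*\}}$ lands in a setting where the induction hypothesis applies needs the observation that the base object changes under localization. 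None of this is fatal, but if you intend to supply a self-contained proof rather than cite \cite{GKK2},\cite{GKK3}, those are the places that require genuine work.
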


\medskip

{\bf In the remainder  of the paper, $\CA$ shall denote a  non-trivial amenable family of functors into a category $\CC$.}

\medskip

Now we begin to talk about the prototypical examples of an amenable
family of functors : complete types in rosy theories.
{\bf In the sequel we work with a large saturated model $\CM=\CM^{\eq}$ and its theory $T$ which is rosy.}
 Recall that a theory is called  {\em rosy} if there is a ternary independence relation
$\indo$ on the small sets of its model, satisfying the basic
independence properties. (See \cite{A}, \cite{EO} for the  precise
definition.)  We take $\indo$ here to be thorn-independence. Any
simple or o-minimal theory is known to be rosy. Moreover, if a simple theory $T$ has elimination of hyperimaginaries then non-forking independence is equal to thorn-independence.  {\bf So we assume that any simple $T$ in this paper
  has  elimination of hyperimaginaries}. (Of course this is just for convenience as we can work in $\CM^{\heq}$ without the assumption.) In particular, we assume that 3-amalgamation holds over any algebraically closed set in simple $T$.

\smallskip

We fix any algebraically closed small subset $B\subseteq \CM$ and consider the category $\CC_B$ whose objects are all the small subsets of $\CM$ containing $B$, and whose morphisms are elementary maps over $B$ (i.e., fixing $B$ pointwise). We also fix any $p(x)\in S(B)$ (where $x$ could be an infinite tuple). When $f$ is any functor from a primitive category $X$ into $\CC_B$ and  $u\subseteq v\in X$, we shall abbreviate $f^u_v(f(u))$  as $f^u_v(u)$.

\begin{definition}
By a \emph{closed independent functor in $p(x)$}, we mean a functor $f$ from some primitive category $X$ into $\CC_B$ satisfying the following:

\be
\item Whenever $\{ i \}\subset \omega$ is an object in $X$, we can choose a realization $b\models p(x)$ such that, if we let $C:=f^{\emptyset}_{\{ i \}}(\emptyset)$ then $f(\{ i \}) = \acl(Cb)$ and $b\indo_B C$.

\smallskip

\item Whenever $u(\neq \emptyset)\subset \omega$ is an object in $X$, we have
\[ f( u) = \acl \left( \bigcup_{i\in u} f^{\{ i \}}_u(\{ i \}) \right)
\]
and $\{f^{\{i\}}_u(\{i\})|\ i\in u \}$ is independent over $f^\emptyset_u(\emptyset)$. 
\ee
\end{definition}

\smallskip

We let $\CA(p)$ be the family of all closed independent functors in $p$.

\begin{fact}\cite{GKK3}
$\CA(p)$ is a non-trivial amenable family of functors.
\end{fact}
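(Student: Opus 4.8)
\emph{Proof sketch.} The plan is to verify directly the four defining properties of an amenable family for $\CA(p)$, together with $1$-amalgamation and strong $2$-amalgamation; throughout one uses the basic properties of thorn-independence (invariance, finite character, monotonicity, base monotonicity, transitivity, symmetry, extension, and the usual criterion for a family of sets to be independent over a base) together with the facts that $\acl$ of a small set is small and that both $\acl(\cdot)$ and thorn-independence over $B$ are preserved by elementary maps over $B$. The first three items are essentially formal. Closure under isomorphism and permutation holds because conditions (1)--(2) in the definition of a closed independent functor mention only realizations of $p$, algebraic closures, and thorn-independence over $B$, all of which transport along an elementary map over $B$ and are insensitive to relabelling of indices. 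Closure under restriction and union holds because conditions (1)--(2) are \emph{local}: each refers only to a single object $u$ together with $\emptyset$ and the singletons $\{i\}$ with $i\in u$, and all of these lie in $\CP(u)$; hence $f\in\CA(p)$ iff $f\restriction\CP(u)\in\CA(p)$ for every object $u$. And $1$-amalgamation is immediate: $\CP^-(1)=\{\emptyset\}$, so extending a functor defined only at $\emptyset$ just means choosing $b\models p$ with $b\indo_B f(\emptyset)$ (possible by extension, as $f(\emptyset)$ is small) and setting $f(\{0\}):=\acl(f(\emptyset)b)$.

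For closure under localization, let $f\colon X\to\CC$ be in $\CA(p)$ and $t\in X$; one checks $f|_t\colon X|_t\to\CC$ against (1)--(2). For (2) at an object $v\ne\emptyset$ of $X|_t$ one has $(f|_t)(v)=f(t\cup v)$, $(f|_t)^{\{j\}}_v(\{j\})=f^{t\cup\{j\}}_{t\cup v}(f(t\cup\{j\}))$, and $(f|_t)^\emptyset_v(\emptyset)=f^t_{t\cup v}(f(t))$; condition (2) for $f$ at the object $t\cup v$ gives that $\{f^{\{i\}}_{t\cup v}(\{i\})\mid i\in t\cup v\}$ is independent over $f(\emptyset)$, and the standard fact that absorbing the subfamily indexed by $t$ into the base preserves independence yields precisely the independence of $\{(f|_t)^{\{j\}}_v(\{j\})\mid j\in v\}$ over $f^t_{t\cup v}(f(t))$; the $\acl$-generation identity is then routine. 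Condition (1) for $f|_t$ is handled the same way, taking for the realization at $\{j\}$ the image under $f^{\{j\}}_{t\cup\{j\}}$ of one witnessing (1) for $f$ at $\{j\}$, and reading off the needed independence from condition (2) for $f$ at $t\cup\{j\}$.

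For strong $2$-amalgamation, given simplices $f\colon\CP(s)\to\CC$ and $g\colon\CP(t)\to\CC$ in $\CA(p)$ agreeing on $\CP(s\cap t)$, pass to isomorphic copies so that all structure maps are inclusions (so that $f$ and $g$ literally agree as systems of sets on $\CP(s\cap t)$), then apply an automorphism fixing $B\cup f(s\cap t)$ pointwise to arrange $g(t)\indo_{f(s\cap t)}f(s)$; transitivity of thorn-independence then shows $\{f(\{i\})\mid i\in s\}\cup\{g(\{j\})\mid j\in t\setminus s\}$ is independent over $f(\emptyset)=g(\emptyset)$, and setting $h(u):=\acl\big(\bigcup_{i\in u\cap s}f(\{i\})\cup\bigcup_{j\in u\setminus s}g(\{j\})\big)$ with inclusion maps defines a closed independent functor extending both $f$ and $g$.

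The step I expect to require real work is the fourth axiom, that extensions of localizations are localizations of extensions. Let $f\colon X\to\CC$ in $\CA(p)$ split at $t$ and let $g\colon Z\to\CC$ in $\CA(p)$ extend $f|_t$ with $t\cap\bigcup Z=\emptyset$; one must produce $h\colon\CP(t)+Z\to\CC$ in $\CA(p)$ extending $f$ with $h|_t=g$. After reducing (compatibly for $f$ and $g$) to inclusion structure maps, put $A:=f(\emptyset)$ and note $g(\emptyset)=(f|_t)(\emptyset)=f(t)=\acl\big(\bigcup_{i\in t}f(\{i\})\big)$ with $\{f(\{i\})\mid i\in t\}$ independent over $A$, while condition (1) for $g$ gives, for each singleton $\{j\}\in Z$, some $b_j\models p$ with $g(\{j\})=\acl(f(t)b_j)$ and $b_j\indo_B f(t)$. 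Define $h$ on singletons by $h(\{i\}):=f(\{i\})$ for $i\in t$, $h(\{j\}):=f(\{j\})$ when $\{j\}\in X|_t$, $h(\{j\}):=\acl(Ab_j)$ when $\{j\}\in Z\setminus X|_t$, and put $h(u):=\acl\big(A\cup\bigcup_{l\in u}h(\{l\})\big)$ (with $h(\emptyset):=A$) and inclusion maps. One then checks, in turn: that $h$ restricts to $f$ on $X=\CP(t)+X|_t$ and that $h|_t=g$ (using $\acl(f(t)\cup h(\{j\}))=g(\{j\})$ for every singleton $\{j\}\in Z$, in both cases); condition (1) for $h$ (each case yielding a realization of $p$, with a use of base monotonicity to pass from independence over $f(t)$ to independence over $B$); and condition (2) for $h$, which is the crux. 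For the latter, observe that for $k\in Z$ the family $\{h(\{j\})\mid j\in k\}$ is independent over $f(t)$ (since $\{g(\{j\})\mid j\in k\}$ is, by condition (2) for $g$, and $h(\{j\})\subseteq g(\{j\})$), and that each $h(\{j\})$, $j\in k$, is independent from $f(t)$ over $A$ (from condition (2) for $f$ at $t\cup\{j\}$ when $\{j\}\in X|_t$, and from $b_j\indo_B f(t)$ otherwise). A gluing lemma for independent families --- if $\{E_i\}$ is independent over $A$, $F:=\acl\big(A\cup\bigcup_iE_i\big)$, $\{G_l\}$ is independent over $F$, and each $G_l$ is independent from $F$ over $A$, then $\{E_i\}\cup\{G_l\}$ is independent over $A$ --- then gives that $\{f(\{i\})\mid i\in t\}\cup\{h(\{j\})\mid j\in k\}$ is independent over $A$, whence every subfamily indexed by an object $r\cup k$ of $\CP(t)+Z$ is independent over $A=h(\emptyset)$, which is condition (2) for $h$. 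The genuinely delicate points are the derivation of this gluing lemma from the independence axioms (a short induction, handing the base across from $A$ to $F$ via symmetry and transitivity) and keeping the maps coming from $f$ and from $g$ compatible over their common part $f(t)$; the other four conditions are formal by comparison. $\square$
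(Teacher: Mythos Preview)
The paper does not supply its own proof of this Fact: it is stated with a citation to \cite{GKK3} and nothing more. Your sketch is a correct direct verification of the four amenability axioms together with $1$-amalgamation and strong $2$-amalgamation, using exactly the expected ingredients from the thorn-independence calculus (extension, monotonicity, transitivity, symmetry, and the behaviour of $\acl$ under elementary maps). The outline for axiom~(4) --- reducing to inclusion maps, defining $h$ on singletons, and invoking a small ``gluing'' lemma for independent families --- is the standard route, and your identification of that step as the only one requiring genuine care is accurate.

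One terminological slip worth noting: in checking condition~(1) for $h$ you write ``base monotonicity to pass from independence over $f(t)$ to independence over $B$'', but base monotonicity goes the other way (enlarging the base). What you actually need there is ordinary monotonicity (from $b_j\indo_B f(t)$ and $A\subseteq f(t)$ to $b_j\indo_B A$), and in the analogous spot in the localization argument it is transitivity (combining $b\indo_D C$ with $b\indo_B D$) rather than base monotonicity. These are cosmetic; the argument itself is sound.
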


\begin{notation}
We shall abbreviate $S_n(\CA(p);B), C_n(\CA(p); B), \ldots$ as  $S_n\CA(p), C_n\CA(p), \ldots$. We shall also abbreviate $H_n(\CA(p); B)$ simply as $H_n(p)$.  Other than this, we use standard notation. For example
$a\equiv_Ab$ denotes $\tp(a/A)=\tp(b/A)$;  and $a\equiv^L_Ab$ denotes $\ltp(a/A)=\ltp(b/A)$, i.e., the Lascar (strong) types of $a,b$ over $A$ are the same.
\end{notation}

\section{$H_1(p)$ in rosy theories}
 \label{sec:h1}

If a theory $T$ is simple then due to 3-amalgamation and Fact \ref{funfact},
we know $H_1(p)=0$. In this section we show the same holds for any rosy $T$
as long as $p$ is a Lascar type.

\smallskip

Let $f\colon X \raw \CC_B$ be any functor in $\CA(p)$ with $f(\emptyset) = B$. If  $u\in X$ with $u = \{ i_0 <\cdots <i_k \}$, we shall write $f(u)=[a_0,\ldots,a_k]$, where $a_j\models p$,
$f(u)=\acl(B,a_0\cdots a_k)$, and $\acl(a_jB)=f^{\{i_j\}}_u(\{i_j\})$. Thus $\{a_0,\ldots,a_k\}$ is independent over $B$.

\begin{theorem}\label{overmodel}
If $B$ is a model, then $\CA(p)$ has weak $3$-amalgamation over $B$ (so $H_1(p)=0$).
\end{theorem}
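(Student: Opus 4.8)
The plan is to show directly that any $1$-shell over a model $B$ bounds a $2$-chain of length at most $3$. Fix a $1$-shell $c = \partial(f_0) - \partial(f_1) + \partial(f_2)$ (up to sign), encoded by the unique functor $g\colon \CP^-(\{0,1,2\}) \raw \CC_B$ extending the three edges $f_0,f_1,f_2$, as guaranteed by Proposition \ref{shellremark}(3). Writing the three vertices of this ``missing triangle'' as $g(\{i\}) = \acl(Ba_i)$ with $a_i \models p$, the edges carry the data $g(\{i,j\}) = [a_i', a_j']$ for suitable conjugates, and the shell condition says these edges are pairwise compatible on the shared vertices. What we must produce is a $2$-simplex (or a short $2$-chain) filling it. The key point where modelhood of $B$ enters is that, since $B \models T$ and $B$ is $\acl$-closed, the independence relation $\indo$ enjoys the stationarity-like behaviour needed to freely amalgamate: over a model, Lascar strong type equals type, so any edge $[a_i',a_j']$ can be moved by an elementary map over $B$ to a prescribed one while controlling independence.

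The first step is to normalize the shell. Using the renaming-of-support and the automorphisms $\sigma^*_n$ from Remark/Definition \ref{sigma*}, together with closure of $\CA$ under isomorphism and permutation, I would reduce to the case where the three vertices sit in a common independent configuration, i.e. where $g$ restricted to the three singletons is (isomorphic to) the functor with $g(\{i\}) = \acl(Ba_i)$ for a single $B$-independent triple $a_0, a_1, a_2$, each $a_i \models p$. The edges $g(\{i,j\})$ are then $\acl$ of $B$ together with a $B$-independent pair that is, vertex-by-vertex, conjugate over $B$ to $(a_i, a_j)$.

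Next, the construction: I would build a $2$-simplex $h\colon \CP(\{0,1,2\}) \raw \CC_B$ extending $g$. Concretely, choose $d \models p$ with $d \indo_B a_0 a_1 a_2$ and realize the top vertex as $h(\{0,1,2\}) = \acl(B a_0 a_1 a_2 d)$ — or, more carefully, realize each edge $g(\{i,j\})$ inside this set by an elementary map over $B$, using strong $2$-amalgamation (available since $\CA$ is non-trivial) to glue two edges at a time, and then invoke amenability property (4) (``extensions of localizations are localizations of extensions'') to promote the glued edges to a genuine functor on $\CP(\{0,1,2\})$. Here is where stationarity over the model $B$ does the real work: to know that the two successive amalgams are compatible — that the copy of the third edge produced by amalgamating the first two actually matches $g$ of that edge up to an elementary map over $B$ — I need that $\ltp = \tp$ over $B$, so that the relevant partial elementary maps extend. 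If a single $2$-simplex cannot be found on the nose, I would instead produce a length-$3$ chain: fill in an auxiliary vertex and bound the shell by three $2$-simplices sharing that vertex, exactly as in the simple case but with Lascar strong type replaced by type over the model.

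The main obstacle I anticipate is the compatibility of the amalgamation steps — i.e., verifying that after gluing edges $f_0$ and $f_1$ along their common vertex, the induced third edge is conjugate over $B$ to $f_2$ \emph{via a map that is the identity on the shared vertices}, so that amenability (4) applies. This is precisely the failure point in general rosy theories and the reason one restricts to Lascar types (and here, to $B$ a model, where every type is a Lascar type); the resolution is to track Lascar strong types through each amalgam and use finiteness of Lascar distance only trivially (distance $0$) over a model. Once this compatibility is in hand, Fact \ref{funfact}(2) gives $H_1(p) = 0$ immediately.
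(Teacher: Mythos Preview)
Your outline has the right overall shape---introduce a fourth vertex and cap the shell with three $2$-simplices sharing it---but it is missing the one concrete step that makes the argument go through, and the justification you offer in its place does not work.

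The crux is this: after normalizing so that the two edges at vertex~$1$ share $[a]$ and the two edges at vertex~$2$ share $[b]$, the two edges at vertex~$0$ carry \emph{different} realizations $[c]$ and $[c']$ of~$p$. When you add a new vertex $[c'']$ and build the three $2$-simplices $a_{013},a_{023},a_{123}$, the compatibility you need to make the boundary come out to the original shell is exactly that the $\{0,3\}$-faces agree, i.e.\ $cc''\equiv_B c'c''$. You propose to pick $c''\models p$ with $c''\indo_B a_0a_1a_2$ and then invoke ``stationarity'' or ``$\ltp=\tp$ over a model''. But in a general rosy theory thorn-independence is \emph{not} stationary, even over a model, and the equality $\ltp=\tp$ only tells you $c\equiv_B c'$ implies $c\equiv^L_B c'$; it says nothing about an arbitrary independent $c''$ satisfying $cc''\equiv_B c'c''$. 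So an arbitrary thorn-independent choice of $c''$ will not do.

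What the paper does instead is choose $c''$ realizing a \emph{coheir} of $p$ over $Babcc'$. The point is that a coheir, being finitely satisfiable in the model $B$, is $\Aut(\CM/B)$-invariant; since $c\equiv_B c'$, invariance gives $\tp(c''/Bc)$ and $\tp(c''/Bc')$ as conjugate restrictions of the same global type, hence $cc''\equiv_B c'c''$ immediately. Coheirs also thorn-fork over nothing outside $B$, so the required independences for the $2$-simplices hold. This is the missing idea; once you have it, the three $2$-simplices assemble and $\partial g=f$ with $|g|=3$.
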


\begin{proof}
Let $f = a_{12}-a_{02}+a_{01}$ be any $1$-shell in $E_1\CA(p)$ where  each $a_{ij}\colon \CP(\{i,j\})\rightarrow
\CC_B$ is a 1-simplex. We want to find a 2-chain $g$ with length 3 such that $\Bd g=f$.
For this goal there is no harm in assuming that $a_{01}(\{1\})=[a]=a_{12}(\{1\})$ and
$a_{12}(\{2\})=[b]=a_{02}(\{2\})$. Let $a_{01}(\{0\}):=[c]$ and $a_{02}(\{0\}):=[c']$, and let $q$ be a coheir of $p$ over $Babcc'$. Choose any $c''\models q$. Then $c''\indo_Babcc'$ (see \cite{EO}) and  $cc''\equiv_B c'c''$.
Now let $g:=a_{123}-a_{023}+a_{013}$ where  $a_{ij3}$ are 2-simplices  having support $\{i,j,3\}$ extending
$a_{ij}$ such that $a_{123}(\{1,2,3\})=[a,b,c'']$, $a_{023}(\{0,2,3\})=[c',b,c'']$, $a_{013}(\{0,1,3\})=[c,a,c'']$. Hence we may
assume $\Bd^0(a_{023})=\Bd^0(a_{123})$ and $\Bd^0(a_{013})=\Bd^1(a_{123})$.
But $cc''\equiv_B c'c''$ implies that we may further assume $\Bd^1(a_{013})=\Bd^1(a_{023})$. Therefore $\Bd g=f$ as desired.
\end{proof}

\begin{remark}
Of course the same proof shows that weak 3-amalgamation (over a model) holds not only in $\CA(p)$ but
 more generally inside $\CM$ (with arbitrary vertices).
\end{remark}

Recall that, for any tuples $a$ and $b$, we write $d_B(a, b) \leq n$ iff there is a sequence of tuples $c_0, \ldots, c_n$ with $c_0=a$ and $c_n=b$, such that each $c_ic_{i+1}$ begins some $B$-indiscernible sequence. The smallest such $n$ (if it exists) is denoted by $d_B(a, b)$ (called the \emph{Lascar distance between $a$ and $b$}). Recall the fact that $a\equiv^L_Bb$ iff $d_B(a,b)<\omega$ in any rosy theory.

\begin{lemma}\label{laslem}
Let $I=\langle a_0,a_1,\ldots\rangle$ be any $B$-indiscernible sequence. Then for any $c_0$ there is $c\equiv_B c_0$ such that  $c\indo_B a_0a_1$ and  $ca_0\equiv_B ca_1$.
\end{lemma}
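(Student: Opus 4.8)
I want to find $c \equiv_B c_0$ with $c \indo_B a_0a_1$ and $ca_0 \equiv_B ca_1$. The natural source of an element with the homogeneity property $ca_0 \equiv_B ca_1$ is a coheir (finitely satisfiable) extension, exactly as in the proof of Theorem \ref{overmodel}: there the element $c''$ realizing a coheir of $p$ over $Babcc'$ satisfied both $c'' \indo_B abcc'$ and $cc'' \equiv_B c'c''$. So the first thing I would do is let $q$ be a coheir (over $B$, finitely satisfiable in $B$ — or, to be safe in a rosy rather than simple context, a global coheir over some model containing $B$) of $\tp(c_0/B)$ over $B a_0 a_1$, and pick $c \models q$. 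Since $q$ is finitely satisfiable in $B$ (equivalently, a coheir), $c \indo_B a_0 a_1$ holds by the basic properties of thorn-forking recalled in \cite{EO}, and clearly $c \equiv_B c_0$.

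**Getting the symmetry $ca_0 \equiv_B ca_1$.** This is the step that needs care and is, I expect, the main obstacle. A coheir of $\tp(c_0/B)$ over $Ba_0a_1$ only automatically gives $\tp(c/Ba_0a_1)$ as a limit of types realized in $B$; it does not by itself equate $\tp(a_0c/B)$ with $\tp(a_1c/B)$. To force that, I would use the indiscernibility of $I$ together with the definition of coheir as a limit along an ultrafilter on the small model: because $a_0 \equiv_B a_1$, any formula $\varphi(x,a_0) \in \tp(c/Ba_0a_1)$ witnessed by $b \in B$ as $\models \varphi(b,a_0)$ has $\models \varphi(b,a_1)$ as well (the witness comes from $B$, over which $a_0$ and $a_1$ have the same type), so $\varphi(x,a_1)$ is also finitely satisfiable in $B$ — but I must still arrange that it actually lands in $\tp(c/Ba_0a_1)$. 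The clean way is to build $c$ by an ultrafilter: fix a non-principal-enough ultrafilter $\mathcal U$ on (a small model containing) $B$ extending the filter of $\tp(c_0/B)$-formulas, set $\tp(c/Ba_0a_1)$ to be the $\mathcal U$-limit, and observe that since the defining data for $\varphi(x,a_0)$ and $\varphi(x,a_1)$ over $B$ coincide (as $a_0 \equiv_B a_1$ and the witnesses are in $B$), the $\mathcal U$-limit assigns them the same truth value; hence $c a_0 \equiv_B c a_1$. Equivalently: a coheir type over $M \supseteq B$ is $\aut(\mathfrak C/M)$-invariant when built from an invariant ultrafilter, but the elementary cleanest argument is the ultrafilter-symmetry one just sketched, which only uses $a_0 \equiv_B a_1$.

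**Assembling.** Once $c$ is produced with these two properties, there is nothing left: $c \equiv_B c_0$ by construction, $c \indo_B a_0a_1$ since $\tp(c/Ba_0a_1)$ is finitely satisfiable in $B$ (coheirs are thorn-independent, \cite{EO}), and $ca_0 \equiv_B ca_1$ by the ultrafilter symmetry. I would remark that we only used indiscernibility of $I$ through the single consequence $a_0 \equiv_B a_1$, so the same statement holds verbatim for any two tuples $a_0, a_1$ with $a_0 \equiv_B a_1$; the indiscernible-sequence phrasing is presumably how the lemma is applied in the sequel (to chain finitely many such moves along a Lascar-distance witness). The one point to double-check is whether the paper wants $B$ to be a model here — in the coheir argument one typically takes a small model $M \supseteq B$ to have a place for the ultrafilter to live, but since finite satisfiability in $B$ already gives thorn-independence over $B$ and the symmetry argument only needs the witnesses to be in $B$, working with an ultrafilter on $B$ itself (viewed as a set of potential witnesses) suffices and no extra hypothesis on $B$ is needed.
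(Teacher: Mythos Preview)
Your coheir argument has a genuine gap. For a type extending $\tp(c_0/B)$ that is finitely satisfiable in $B$ to exist, $\tp(c_0/B)$ must itself be finitely satisfiable in $B$. This is exactly why Theorem~\ref{overmodel} (whose proof you are imitating) assumes $B$ is a model; in the present lemma $B$ is only an algebraically closed set. If, say, $B=\acl(\emptyset)$ is finite and $c_0\notin B$, there is no ultrafilter on $B$ whose average restricts to $\tp(c_0/B)$, and your construction of $c$ never gets off the ground. Your fallback of passing to a model $M\supseteq B$ does not help either: a coheir over $M$ is only $M$-invariant, so you would need $a_0\equiv_M a_1$ (not merely $a_0\equiv_B a_1$) to deduce $ca_0\equiv_B ca_1$, and you are not given this. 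One could try to choose $M$ with $I$ still $M$-indiscernible, but then thorn-independence of $c$ from $a_0a_1$ over $M$ does not automatically descend to thorn-independence over $B$.

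The paper's proof avoids coheirs and instead exploits the whole indiscernible sequence. One extends $I$ to a $B$-indiscernible $I'$ of sufficiently large length, uses the extension axiom for thorn-independence to find $c'\equiv_B c_0$ with $c'\indo_B I'$, and then applies pigeonhole to obtain $a_i,a_j\in I'$ with $c'a_i\equiv_B c'a_j$. Finally $B$-indiscernibility of $I'$ lets one move $(a_i,a_j)$ back to $(a_0,a_1)$ by a $B$-automorphism, carrying $c'$ to the desired $c$. This argument uses indiscernibility in an essential way (both to lengthen $I$ and to transport back to $a_0,a_1$); your remark that only $a_0\equiv_B a_1$ is needed is therefore not supported, and indeed the failure of your coheir approach over non-models suggests the stronger hypothesis is genuinely required.
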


\begin{proof} Extend $I$ to $I'$ indiscernible over $B$ having a sufficiently large length. Then by the extension axiom there is $c'\equiv_B c_0$ such that
$c'\indo_B I'$. Moreover, by the pigeonhole principle, there are $a_i,a_j\in I'$ $(i<j)$ such that $c'a_i\equiv_Bc'a_j$. Now,  by $B$-indiscernibility, there is $c$ such that
$ca_0a_1\equiv_Bc'a_ia_j$. Then $c$ is the desired tuple.
\end{proof}

\begin{theorem}\label{h1=0}
Suppose that $p$ is a Lascar strong type. Then $H_1(p)=0$.
\end{theorem}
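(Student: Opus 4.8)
The plan is to invoke Fact~\ref{funfact}(1): as $\CA(p)$ has $2$-CA, it is enough to prove $E_1\CA(p)\subseteq B_1\CA(p)$, i.e. that every $1$-shell over $B$ bounds a $2$-chain. Fix a $1$-shell $f=a_{12}-a_{02}+a_{01}$ normalised as in the proof of Theorem~\ref{overmodel}, with vertices $a$ (at $1$), $b$ (at $2$), $c$ (at $0$), all realising $p$, and $a_{01}=[c,a]$, $a_{02}=[c,b]$, $a_{12}=[a,b]$. Over a model one finishes as in Theorem~\ref{overmodel} by using a coheir of $p$ over $Babc$ as the apex of a length-$3$ filling; over $B=\acl(B)$ no such coheir need exist, and this is where the hypothesis enters: since $p$ is Lascar, $d_B(a,b)=:n<\omega$, and Lemma~\ref{laslem} will produce the required apices one indiscernible step at a time.

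I would fix a $B$-indiscernible path $a=e_0,e_1,\dots,e_n=b$ witnessing $d_B(a,b)\le n$, then refine it using the genericity in the proof of Lemma~\ref{laslem} together with an Erd\H{o}s--Rado extraction: replace the witnessing sequences and the interior vertices $e_1,\dots,e_{n-1}$ by ones generic over $Bc$, so that $\{e_0,\dots,e_n,c\}$ is $\indo$-independent over $B$ and each $e_ie_{i+1}$ begins a $Bc$-indiscernible sequence. Next assemble all the vertices, the edges $[c,e_i],[e_i,e_{i+1}],[e_0,e_i]$, and the simplices below into a single closed independent functor extending $f$ (built by repeated strong $2$-amalgamation), so everything matches on overlaps. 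Then for each $i<n$ the $1$-shell $h_i:=[e_i,e_{i+1}]-[c,e_{i+1}]+[c,e_i]$ has a hard edge of Lascar distance $\le 1$ over $Bc$; running the argument of Theorem~\ref{overmodel} over $Bc$ with the apex $c_i''$ supplied by Lemma~\ref{laslem} over $Bc$ — so that $c_i''\indo_{Bc}e_ie_{i+1}$, $c_i'' e_i\equiv_{Bc}c_i'' e_{i+1}$ (which lets the troublesome $\{c,c_i''\}$-faces be made equal), and, choosing $c_i''$ generic, $c_i''\indo_B c$ — gives a $2$-chain $g_i$ in $\CA(p)$ with $|g_i|\le 3$ and $\Bd g_i=h_i$. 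Since the $e_i$ are $B$-independent, the $2$-simplices $d_i:=[e_0,e_i,e_{i+1}]$ ($1\le i\le n-1$) exist in $\CA(p)$ as faces of the big configuration. After aligning supports and signs (using the $\sigma^*_n$ of Remark/Definition~\ref{sigma*} if needed), a telescoping identity reads
\[
\textstyle\sum_{i<n}h_i-\sum_{1\le i\le n-1}\Bd d_i=[e_0,e_n]-[c,e_n]+[c,e_0]=a_{12}-a_{02}+a_{01}=f ,
\]
so $f=\Bd\bigl(\sum_{i<n}g_i-\sum_{1\le i\le n-1}d_i\bigr)\in B_1\CA(p)$, and this $2$-chain has length $\le 3n+(n-1)=O(d_B(a,b))$, matching the heuristic in the introduction.

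The routine work is the telescoping bookkeeping and checking that each $g_i,d_i$ restricts to the prescribed edges. The step I expect to be the main obstacle is the refinement of the path: one must make the path and its witnessing sequences generic enough over $Bc$ that Lemma~\ref{laslem} can be run over the base $Bc$ (yielding apices $c_i''$ independent from $c$ and making the $\{c,c_i''\}$-faces coincide) and the discrepancy simplices $d_i$ exist, all while keeping the configuration inside one closed independent functor whose $\{0,1\}$-, $\{0,2\}$- and $\{1,2\}$-faces are exactly the given $a_{01},a_{02},a_{12}$. Over a model this genericity is automatic from the invariance of coheirs; over a merely algebraically closed $B$ one must instead extract, from a long $B$-indiscernible extension of each witnessing sequence, a subsequence indiscernible over $Bc$ with the same $B$-Ehrenfeucht--Mostowski type, and transport the apex back along a $B$-automorphism — this extraction-and-transport, together with the global coherence of the configuration, is the technical heart of the proof.
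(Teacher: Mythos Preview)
There is a genuine gap at the very first step: your normalisation of the $1$-shell is too strong and is not what the proof of Theorem~\ref{overmodel} provides. In that proof one arranges the vertex at~$1$ to be the same realization~$a$ in both $a_{01}$ and $a_{12}$, and the vertex at~$2$ to be the same~$b$ in both $a_{12}$ and $a_{02}$; but at vertex~$0$ the paper writes $a_{01}(\{0\})=[c]$ and $a_{02}(\{0\})=[c']$ with $c$ and $c'$ a priori distinct. The two embeddings of the common $0$-face into $a_{01}(\{0,1\})$ and into $a_{02}(\{0,2\})$ are different elementary maps, and their image realizations of~$p$ are interalgebraic over~$B$ but need not be equal. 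Only two of the three shared vertices can be forced to literally coincide; the third is then determined and may fail to match.

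This is not a cosmetic point but the entire content of the theorem. If your normalisation $a_{02}=[c,b]$ with the \emph{same}~$c$ as in $a_{01}=[c,a]$ were available, then picking any $c''\models p$ with $c''\indo_B abc$ and forming the three $2$-simplices with top faces $[a,b,c'']$, $[c,b,c'']$, $[c,a,c'']$ would already fill the shell with length~$3$: the $\{0,3\}$-faces are both $[c,c'']$ and cancel automatically, no Lascar hypothesis needed. Your own chain $\sum g_i-\sum d_i$ would then be an unnecessarily long version of this, and your worry about making ``the troublesome $\{c,c_i''\}$-faces coincide'' via $c_i''e_i\equiv_{Bc}c_i''e_{i+1}$ is misplaced, since with a single~$c$ those faces coincide for free. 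But weak $3$-amalgamation fails in $\CA(p_n)$ for $n\ge 5$ (Corollary~\ref{no3weak}), so the normalisation is impossible in general.

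The paper's proof accordingly telescopes not along a Lascar path from $a$ to~$b$, but along one from $c$ to~$c'$: since $p$ is a Lascar type, $d_B(c,c')=n<\omega$, and one takes $c=c_0,\dots,c_n=c'$. For each step Lemma~\ref{laslem} (applied over~$B$, not over~$Bc$) produces an apex $e_i\models p$ with $e_ic_i\equiv_B e_ic_{i+1}$; this equality is exactly what makes the $\{0,3\}$-faces of $g_i^+=[c_i,a,e_i]$ and $g_i^-=[c_{i+1},a,e_i]$ agree, so that $\Bd(g_i^+-g_i^-)$ replaces the edge $[c_i,a]$ by $[c_{i+1},a]$. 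A final block of three simplices brings in $a_{12}$ and $a_{02}$. No refinement of the path over an enlarged base, no discrepancy simplices~$d_i$, and no Erd\H{o}s--Rado extraction are required.
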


\begin{proof} For notational simplicity we let $B = \emptyset$. As in the proof of Theorem \ref{overmodel},
given any 1-shell $f=a_{12}-a_{02}+a_{01}$ in $E_1\CA(p)$ where each $a_{ij}\colon\CP(\{i,j\})\rightarrow
\CC_B$ is a 1-simplex, we want to find a 2-chain $g$  such that $\Bd g=f$.
Again there is no harm in  assuming that $a_{01}(\{1\})=[a]=a_{12}(\{1\})$ and
$a_{12}(\{2\})=[b]=a_{02}(\{2\})$. Let $a_{01}(\{0\}):=[c]$ and $a_{02}(\{0\}):=[c'].$ By extension we can further assume $\{a,b,c,c'\}$ is independent.
Now $c,c'\models p$ and let $d(c,c')=n$. So there are $c=c_0,\ldots,c_n=c'$ such that $c_ic_{i+1}$ begins an indiscernible sequence, for $i<n$.
We can further assume that $ab\indo_{cc'}c_1c_{n-1}$; so $ab\indo c_0\cdots c_n$.
Then by Lemma \ref{laslem}, there are $e_i\models p$ ($i<n$) such that $c_ic_{i+1}\indo e_i$ and $e_ic_i\equiv e_ic_{i+1}$ (*). Again by extension
we suppose $ab\indo_{c_ic_{i+1}}e_i$, so that  each of the $\{a,c_i,e_i\},\{a,c_{i+1},e_i\}$ is independent. Moreover
each $\{ a,e_{n-1},b\},\{e_{n-1},c_n,b  \}$ is independent as well\ (**).

Now there is  $g_0:=g^+_0-g^-_0$  where $g^+_0,g^-_0$ are 2-simplices with  support $\{0,1,3\}$ such that
$g^+_0(\{0,1,3\})=[c_0,a,e_0]$ and  $g^-_0(\{0,1,3\})=[c_{1},a,e_0]$; $\Bd^0 g^+_0=\Bd^0 g^-_0$;  $\Bd^1 g^+_0=\Bd^1 g^-_0$ (this is possible by (*));
and $g^+_0$ extends $a_{01}$ (i.e., $\Bd^2 g^+_0=a_{01}$). Hence $\Bd g_0=a_{01}-\Bd^2 g^-_0$.

By iteration we can find
$g_i:=g^+_i-g^-_i$  ($0< i<n-1$) where $g^+_i,g^-_i$ are 2-simplices with  support $\{0,1,3\}$ such that
$g^+_i(\{0,1,3\})=[c_i,a,e_i]$ and  $g^-_i(\{0,1,3\})=[c_{i+1},a,e_i]$; $\Bd^0 g^+=\Bd^0 g^-$;  $\Bd^1 g^+=\Bd^1 g^-$ (this again is possible by (*));
and $\Bd^2 g^+_i=\Bd^2 g^-_{i-1}$. Therefore we have
$$\Bd (g_0+\cdots+g_{n-2})=a_{01}-\Bd^2g^-_{n-2}.$$

The rest of the proof is similar to that of Theorem \ref{overmodel}. We put $g_{n-1}:=g^+_{n-1}-a_{023}+a_{123}$ where
$a_{j23}$ is a 2-simplex with  support $\{j,2,3\}$ extending $a_{j2}$ such that  $a_{023}(\{0,2,3\})=[c_n,b,e_{n-1}], a_{123}(\{1,2,3\})=[a,b,e_{n-1}]$ (see (**)).
Also $g^+_{n-1}$ is a 2-simplex with $g^+_{n-1}(\{0,1,3\})=[c_{n-1},a,e_{n-1}]$ extending  $\Bd^2g^-_{n-2}$. Moreover again by (*), we have $\Bd^1g^+_{n-1}=\Bd^1a_{023}$.
Thus it follows
$$\Bd g_{n-1}=\Bd^2g^+_{n-1}-a_{02}+a_{12}=\Bd^2g^-_{n-2}-a_{02}+a_{12}.$$
Therefore for $g:=g_0 +\cdots+ g_{n-1}$, we have $\Bd g=f$ as desired.
\end{proof}

\section{A classification of 2-chains with a 1-shell boundary}
\label{sec:clsf}

In this section, we bring our attention back to a non-trivial amenable
family  of functors $\CA$ and  classify 2-chains of $\CA$ having
1-shell boundaries.
Basically we show that any $2$-chain having a $1$-shell boundary is
equivalent to one of two types of $2$-chains, called the {\em NR-type}
and the {\em RN-type}. 





\smallskip

We start  by introducing  two operations on $2$-chains called the \emph{crossing operation} and
the \emph{renaming-of-support operation}, respectively.
For any distinct real numbers $x$ and $y$, we shall abbreviate the open
interval $(\min\{x,y\}, \max\{x,y\})$ as $[(x,y)]=[(y,x)]$.

\begin{definition}
Let $v\in C_2(\CA; B)$ be a $2$-chain and let $w:=\epsilon_1\alpha_1
+\epsilon_2 \alpha_2$ be a subsummand of $v$, where $\alpha_i$'s
are $2$-simplices with for $i=1,2$, $\epsilon_i = \pm 1$,
$\supp(\alpha_i) = \{\ell_1, \ell_2, k_i \}$ ($k_i, \ell_i$ being all
distinct numbers) such that $\alpha_1$ and $\alpha_2$ agree on the
intersection of their domains, namely $\CP(\{ \ell_1, \ell_2 \})$. Further assume that, if we let
$\gamma:=\alpha_i \restriction \CP(\{ \ell_1, \ell_2  \})$, then
$\gamma$ does not appear in $\Bd(w)$, i.e., the two  $\gamma$ terms in
$\Bd(w)$ have opposite signs and cancel each other. 

Now by strong $2$-amalgamation,
there exists some 3-simplex $\mu$ extending both $\alpha_i$. For
$i=1,2$, let $\beta_i: = \mu \restriction \CP(\{k_1, k_2, \ell_i \})$
and\\
 $ w' : =
 \begin{cases} 
   \epsilon_2\;\beta_1 +\epsilon_1\; \beta_2&\mbox{if } \epsilon_1
     \epsilon_2 = -1,\mbox{and exactly one of } k_2, \ell_1 \mbox{ belongs to
     } [(k_1, \ell_2)] \\
\epsilon_1\; \beta_1 +\epsilon_2 \;\beta_2&\mbox{otherwise.}
 \end{cases}
$

Then the operation of replacing the subsummand $w$ in $v$ by $w'$ is
called the {\em crossing operation} (or simply {\em CR-operation}).
\end{definition}


\begin{example}\label{proper}
Let $f_0, f_1, f_2, f_3$ be $2$-simplices with $\supp(f_i) = \{ 0, 1,
2, 3 \} \setminus \{ i \}$. Assume that  $f_i$ and $f_j$ agree on
their intersection, for every pair $i, j$. Consider the $2$-chain
$ c = f_0 - f_1 + f_2$. Then we can apply the CR-operation to the subsummand $f_0 - f_1$ to obtain  a new $2$-chain
\[ c' = (-f_2+f_3) + f_2  \mbox{ or simply }  f_3.
\]
\noindent 
This example illustrates in particular that a CR-operation may not be reversible. i.e., once we apply a CR-operation to a $2$-chain, we may not be able to recover the original $2$-chain  by applying more CR-operations (unless we allow $2$-chains to be written redundantly  as $f_3 - f_2 + f_2$).
\end{example}

\smallskip

Next, we define an operation on $n$-chains called the \emph{renaming-of-support operation}. 

\begin{definition}
Let $c$ be an $n$-chain in $C_n(\CA; B)$ and let $d$ be a subsummand
of $c$. Let $j\in$ $\supp(d)$ such that $j\notin \supp (\Bd_n (d))$. (In this
situation, we say that $d$ has a \emph{vanishing support}, namely $j$,
in its boundary.) Choose any $k\notin$ $\supp(c)$ and any bijection
$\sigma\colon \omega \raw \omega$ which sends $j\mapsto k$ but which
fixes the rest of the elements in $\supp(c)$. Then the operation of
replacing the subsummand $d$ in $c$ by $\sigma^*_n(d)$ is called the
\emph{renaming-of-support operation} (or simply
\emph{RS-operation}). (See Remark/Definition \ref{sigma*} to recall the definition of $\sigma^*_n$.)
\end{definition}

\begin{remark}
When we apply the CR- and RS-operation to some subsummand of an $n$-chain $c$, the resulting $n$-chain has the same boundary as $c$ (guaranteed by the fact  that $\sigma^*_n$ commutes with the boundary map $\Bd$) and has a shorter or equal length as $c$ (by Remark/Definition \ref{subsum} and the clear fact that $\sigma^*_n$ preserves the lengths of $n$-chains).
\end{remark}

\begin{remark/def} \label{properchain}
A $2$-chain $c$ is called  \emph{proper} if its length $|c|$ does not
change after  any finitely many applications of CR/RS-operations to
its subsummands. It is clear that any $2$-chain may be reduced to a
proper $2$-chain after finitely many applications of the two
operations. Any CR-operation (also RS-operation) applied to any \emph{proper} $2$-chain is in fact reversible.
  This allows us to define an equivalence relation $\sim$ among proper
  $2$-chains by:   $c\sim c'\, \Leftrightarrow \, $ $c$ can be
  obtained from $c'$ by finitely many applications of the CR/RS-operations to its subsummands. Note that $c\sim c'$ implies $\Bd (c)  = \Bd(c')$ and $|c| = |c'|$.
\end{remark/def}



We are now ready to introduce the notions of renameable type and  non-renameable type for 2-chains having  1-shell boundaries.

\begin{definition}
Let $\alpha$ be a $2$-chain having a $1$-shell boundary.
\be
\item We say $\alpha$ is of \emph{renameable type} (or simply \emph{RN-type}) if some subsummand of $\alpha$ has a vanishing support. Otherwise, $\alpha$ is said to be of \emph{non-renameable type} (or simply \emph{NR-type}).
\item $\alpha$ is called \emph{minimal} if it is proper, and for any
  proper $\alpha'$ equivalent to $\alpha$, there does not exist any
  subsummand $\beta$ of $\alpha'$ such that $\Bd(\beta)=0$.
\ee
\end{definition}

\begin{remark}
Suppose that $\alpha$ is
a  2-chain  having a 1-shell boundary.
\be\item
Note that $\alpha$ is of NR-type iff none of the CR or RS-operation is applicable to $\alpha$, i.e. nothing else is equivalent to $\alpha$ except $\alpha$ itself.
So  an NR-type chain is  minimal.

As was the case in Example \ref{proper}, an RN-type $\alpha$ can sometimes be transformed to an NR-type by CR-operations. But if $\alpha$ is proper then its RN/NR-type is preserved under equivalence.


\item We can always find some minimal $2$-chain $\alpha'$ such that
  $\Bd(\alpha)=\Bd(\alpha')$. Such an $\alpha'$ can be  obtained from $\alpha$ by finitely many applications of  CR/RS-operations and deleting subsummands having trivial boundary.

There is a 2-chain $\beta$ with $|\beta|=5$ having a 1-shell boundary such that  any subsummand of $\beta$ does not have the trivial boundary but $\beta'$ with $|\beta'|=5$ obtained from $\beta$ by the CR-operation   has a subsummand with the boundary $0$. 

\item
If $\alpha$ is minimal then  any $\alpha'$ equivalent to $\alpha$ is minimal as well (of course  $|\alpha|=|\alpha'|$
and $\Bd(\alpha)=\Bd(\alpha')$ too).
\ee
\end{remark}

\begin{notationn}
Let $f$ be any simplex. For any subset $\{ j_0, \ldots, j_k
\}\subseteq \supp(f)$, we shall abbreviate $f\restriction \CP(\{
j_0, \ldots, j_k \})$ as $f^{j_0, \cdots, j_k}$. Also, given a chain
$c=\sum_{i\in I}n_if_i$ (in its standard form), and any subset $\{ j_0, \ldots, j_k
\}\subseteq \supp(c)$, we shall write $c^{j_0,\ldots,j_k}$ to denote
the subchain $\sum_{i\in J}n_if_i$, where $J:=\{i\in I\;|\;
\supp(f_i)=\{j_0,\ldots,j_k\}\}$.
\end{notationn}

\begin{example}
Of course any 2-simplex is of NR-type.
The following is an NR-type 2-chain with length $5$: Let $\alpha=a_1 + a_2 + a_3 - a_4 - a_5$ be a 2-chain with
2-simplices $a_i$ having
 $\supp(a_i)=\{0,1,2\}$ such that;
\be \item[$\bullet$] $a_1^{1,2}, a_2^{1,2}=a_4^{1,2}, a_3^{1,2}=a_5^{1,2}$ are distinct; 
 	\item[$\bullet$]
 $a_2^{0,2}, a_1^{0,2}=a_5^{0,2}, a_3^{0,2}=a_4^{0,2}$ are distinct; 
 	\item[$\bullet$] and so are $a_3^{0,1}, a_1^{0,1}=a_4^{0,1}, a_2^{0,1}=a_5^{0,1}$.
 \ee
Then $\alpha$ is of NR-type  with a 1-shell boundary
$a_1^{1,2}-a_2^{0,2}+a_3^{0,1}$.

\end{example}

Before stating our first main theorem of the classification, we introduce a notion called  \emph{chain-walk} which will be used in our proof.

\begin{remark}
Recall that if $\alpha$ is a 2-chain with a 1-shell boundary, then its length is always an odd positive number.
\end{remark}

For the rest of this section, {\bf we fix a  1-shell boundary $f_{12} -f_{02} +
f_{01}$ with $\supp(f_{jk})=\{j<k\}$}.

\begin{definition}
\label{chainwalk}
Let $\alpha$ be a 2-chain having the boundary $f_{12} -f_{02} +
f_{01}$.
A subchain $\beta=\sum_{i=0}^m\limits \epsilon_i b_i$ of $\alpha$ (where $\epsilon_i = \pm 1$ and   $b_i$ is a $2$-simplex, for each $i$) is called
a {\em chain-walk in $\alpha$ from $f_{01}$ to $-f_{02}$} if
        \be\item there are non-zero numbers $k_0,\ldots,k_{m+1}$ (not necessarily distinct) such that   $k_0=1$, $k_{m+1}=2$, and for $ i\leq m$, $\supp(b_i)=\{k_i,k_{i+1},0\}$;
        \item  $(\Bd \epsilon_0 b_0)^{0,1} = f_{01}$, $(\Bd \epsilon_m b_m)^{0,2}=- f_{02}$; and
        \item for $0 \le i < m$,
        $$(\Bd \epsilon_i b_i)^{0, k_{i+1}}+ (\Bd \epsilon_{i+1} b_{i+1})^{0,k_{i+1}}=0.$$
        \ee
 The sum $\sum_{i=0}^m\limits \epsilon_i b_i$ with {\em its order} is called a {\em representation} of the chain-walk $\beta$. 
 Unless said otherwise a chain-walk is written in the form of a representation.
Notice that 
a chain-walk may have more than one representation. For
example, a reordering of terms in $\beta$ above may also satisfy conditions (1)-(3). 
By a \emph{section} of the chain-walk $\beta$, we shall mean a subchain of $\beta$ in the form
\[ \beta':=\sum_{i=j}^{m'}\limits \epsilon_i b_i\quad \mbox{for some $0\leq j<m'\leq m$}
\]
and the sequence $\la k_j,k_{j+1},\ldots,k_{m'},k_{m'+1}\ra$  is called the {\em walk sequence} of $\beta'$.
 A chain-walk $\beta$ in $\alpha$ is called {\em maximal} (in $\alpha$) if it has the maximal possible length.
We say $\alpha$ is {\em centered at} $0$ if some (hence every) maximal chain-walk in $\alpha$ from  $f_{01}$ to $-f_{02}$ is, as a chain, equal to $\alpha$.

We similarly define such notions as  {\em a chain-walk in $\alpha$ from  $-f_{02}$ to $f_{12}$, $\alpha$ is centered at $2$}, and so on.
\end{definition}


\begin{remark}\label{existchainwalk}
In the definition above, if $\beta$ is a chain-walk in $\alpha$ from
$f_{01}$ to $-f_{02}$, then $0\in\supp(b_i)$ for all $i$, but $0\notin \supp(\Bd\beta -f_{01}+f_{02})$; and the walk sequence of $\beta$ is a sequential arrangement of $(\supp(b_i)\setminus \{0\})$'s without repetition of the overlapped support.

Note now that given any 2-chain $\alpha$ as in the definition above, since there are only finitely many 2-simplex terms in $\alpha$, we can always find a chain-walk say, from
$f_{01}$ to $-f_{02}$: We start with any 2-simplex term $b$ in $\alpha$ such that $\Bd^2 b=f_{01}$ and then keep finding a term in $\alpha$ (with the coefficient) cancelling out adjacent 1-simplex boundaries. This process must stop with a term having $-f_{02}$ as its boundary.

Even if $0$ is in the support of  every simplex term of  $\alpha$, it need not be centered at $0$: Let $\alpha= c_0+c_1-c_2$ such that $\Bd c_0=g_{12}-f_{02}+f_{01}$; $\Bd c_1=f_{12}-g_{02}+g_{01}$; and  $\Bd (-c_2)=-g_{12}+g_{02}-g_{01}$, where $f_{ij}\ne g_{ij}$.
Then $c_0$ itself is  a maximal chain-walk in $\alpha$ from $f_{01}$
to $-f_{02}$. Note that $\alpha= c_0+c_1-c_2$ is not a chain-walk from
$f_{01}$ to $-f_{02}$, whereas it is a chain-walk from $f_{12}$ to
$f_{01}$, i.e, $\alpha$ is centered at $1$.
\end{remark}

\begin{Lemma}\label{section}
Let $\alpha$ be a $2$-chain with the $1$-shell
boundary $f_{12} -f_{02} + f_{01}$.
Let $\beta=\sum_{i=0}^m\limits \epsilon_i b_i$  be
a chain-walk in $\alpha$, say from $-f_{02}$ to $f_{12}$. Assume there is  a section $\beta'=\sum_{i=j}^{m'}\limits \epsilon_i b_i$ of $\beta$ such that for $\supp(b_i)=\{2,k_i,k_{i+1}\}$, either $k_i\ne k_{m'+1}$ for all $i=j,\ldots,m'$;
or $k_i\ne k_{j}$ for all $i=j+1,\ldots,m'+1$. Then by finitely many applications of CR-operations to $\beta'$, we obtain a simplex $c$ with $\supp(c)=\{2,k_j,k_{m'+1}\}$ such that, for some $\epsilon = \pm 1$, 
$\beta'':=\sum_{i=0}^{j-1}\limits \epsilon_i b_i+\epsilon c
+\sum_{i>m'}^m\limits \epsilon_i b_i$ is still a chain-walk from $-f_{02}$ to $f_{12}$.
\end{Lemma}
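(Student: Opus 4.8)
The plan is to induct on the length $m'-j$ of the section $\beta'$, reducing it by one at each step via a single CR-operation applied to two adjacent simplex terms $\epsilon_i b_i + \epsilon_{i+1} b_{i+1}$ of $\beta'$. Consider the walk sequence $\langle k_j, k_{j+1}, \ldots, k_{m'+1}\rangle$ of $\beta'$. The hypothesis is that this sequence has a certain ``monotone endpoint'' property: either the last entry $k_{m'+1}$ does not occur among $k_j, \ldots, k_{m'}$, or the first entry $k_j$ does not occur among $k_{j+1}, \ldots, k_{m'+1}$. By symmetry (reversing the order of the representation of $\beta'$ and negating, which turns a chain-walk from $-f_{02}$ into one from $f_{12}$ and back), it suffices to treat the first case. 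In that case I want to locate an adjacent pair $b_i, b_{i+1}$ (with $j\le i<i+1\le m'$, so $\supp(b_i)=\{2,k_i,k_{i+1}\}$ and $\supp(b_{i+1})=\{2,k_{i+1},k_{i+2}\}$) to which the CR-operation applies, i.e. such that $b_i$ and $b_{i+1}$ agree on $\CP(\{2,k_{i+1}\})$ — which holds because condition (3) of the chain-walk definition forces $(\Bd \epsilon_i b_i)^{2,k_{i+1}} + (\Bd \epsilon_{i+1} b_{i+1})^{2,k_{i+1}} = 0$, and since $2, k_{i+1}$ are common vertices the two restrictions to $\CP(\{2,k_{i+1}\})$ must in fact be equal (the simplices, not merely their boundaries, since the functor values on $\CP(\{2,k_{i+1}\})$ are determined) — and such that after crossing, the resulting $2$-simplex $b' := \mu\restriction\CP(\{2,k_i,k_{i+2}\})$ together with the untouched terms still forms a chain-walk. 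Applying the CR-operation to $\epsilon_i b_i + \epsilon_{i+1} b_{i+1}$ replaces it by (up to the sign bookkeeping in the CR-operation's definition) $\epsilon' b'$ plus possibly one more simplex supported on $\CP(\{k_i,k_{i+1},k_{i+2}\})$ — but here is the subtlety: the CR-operation as defined produces \emph{two} simplices $\beta_1,\beta_2$ from $\mu$, one for each $\ell_i$; in our notation $\{\ell_1,\ell_2\}=\{2,k_{i+1}\}$ and $\{k_1,k_2\}=\{k_i,k_{i+2}\}$, so one output simplex has support $\{k_i,k_{i+2},2\}$ and the other has support $\{k_i,k_{i+2},k_{i+1}\}$. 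I need to check that the walk condition lets me discard or absorb the second one — this works precisely when $k_{i+1}$ is a repeated vertex we are trying to eliminate, and the ``monotone endpoint'' hypothesis is exactly what guarantees that after repeatedly crossing we can route everything through vertices $k_j$ and the final $k_{m'+1}$.

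The key steps, in order, are: (i) reduce by the reversal symmetry to the case $k_{m'+1}\notin\{k_j,\ldots,k_{m'}\}$; (ii) choose the \emph{rightmost} index $i$ with $j\le i\le m'-1$; show $b_i$ and $b_{i+1}$ agree on $\CP(\{2,k_{i+1}\})$, extend to a $3$-simplex $\mu$ on $\CP(\{2,k_i,k_{i+1},k_{i+2}\})$ by strong $2$-amalgamation, and read off $\beta_1$ on $\{2,k_i,k_{i+2}\}$; (iii) verify that replacing $\epsilon_i b_i+\epsilon_{i+1}b_{i+1}$ by the appropriate signed $\beta_1$ yields a chain-walk of the same endpoints with walk sequence $\langle k_j,\ldots,k_i,k_{i+2},\ldots,k_{m'+1}\rangle$ — i.e. $k_{i+1}$ has been spliced out — checking conditions (1)--(3) at the two new adjacencies $(b_{i-1}, \beta_1)$ and $(\beta_1, b_{i+2})$ using that $\mu$ restricts compatibly; and (iv) check that the deleted output simplex $\beta_2$ on $\{k_i,k_{i+1},k_{i+2}\}$ can be dropped: since $k_{i+1}$ does not appear elsewhere in the (shortened) walk sequence under our hypothesis, $\beta_2$'s support meets the rest of $\beta$ only through $\CP(\{k_i,k_{i+2}\})$, and the sign conventions of the CR-operation are set up so that the boundary contributions along $\CP(\{k_i,k_{i+2}\})$ cancel against $\beta_1$, leaving the boundary of the whole chain-walk unchanged; then iterate until the section has length $1$, i.e. is a single simplex $c$ on $\{2,k_j,k_{m'+1}\}$.

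The main obstacle I anticipate is step (iv) together with the sign/orientation bookkeeping in step (iii): the CR-operation's definition has that two-case split on whether ``exactly one of $k_2,\ell_1$ belongs to $[(k_1,\ell_2)]$,'' and tracking which case occurs — and hence which signs $\epsilon'$ the outputs carry — through an induction requires care, because the relevant numerical positions of $k_i, k_{i+1}, k_{i+2}, 2$ on the real line change as we splice. I expect the clean way to handle this is to prove the single-step reduction as a self-contained sublemma: ``if $\epsilon_i b_i + \epsilon_{i+1}b_{i+1}$ is a length-$2$ section of a chain-walk and $k_{i+1}$ does not reoccur, then one CR-operation splices it to a length-$1$ section with the correct endpoints and sign, leaving the ambient chain-walk's boundary and chain-walk structure intact.'' Once that sublemma is established with its signs pinned down, the main statement follows by choosing the rightmost (in case~1) or leftmost (in case~2) eligible index and iterating $m'-j-1$ times, noting that the ``no reoccurrence'' hypothesis is preserved by each splice because we only ever remove interior vertices and never touch $k_j$ or the trailing $k_{m'+1}$.
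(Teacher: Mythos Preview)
Your approach is essentially the paper's: induct on $m'-j$ and apply a single CR-operation to the extremal adjacent pair (the paper uses $\epsilon_{m'-1}b_{m'-1}+\epsilon_{m'}b_{m'}$ in case~1 and $\epsilon_jb_j+\epsilon_{j+1}b_{j+1}$ in case~2, then immediately invokes the induction hypothesis on the shortened section $\sum_{i=j}^{m'-2}\epsilon_ib_i+\epsilon'_{m'-1}b'_{m'-1}$). The sign obstacle you anticipate dissolves once you note that CR preserves the boundary of the pair and the second output simplex has support $\{k_{m'-1},k_{m'},k_{m'+1}\}$ not containing $2$: all boundary contributions on edges through $2$ are carried by the first output simplex alone, so the chain-walk adjacency conditions at the splice hold automatically regardless of which sign case of the CR-definition fires---no separate sublemma is needed.
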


\begin{proof}
When $j=m'$, there is nothing to prove. Assume the lemma holds when $m'-j=n$. Let us show that the lemma holds when $m'-j=n+1$.
Assume $k_i\ne k_{m'+1}$ for all $i=j,\ldots,m'$. Then we can apply the CR-operation to $\epsilon_{m'-1}b_{m'-1}+\epsilon_{m'}b_{m'}$, and we get $\epsilon'_{m'-1}b'_{m'-1}+\epsilon'_{m'}b'_{m'}$ with
$\supp(b'_{m'-1})=\{2,k_{m'-1},k_{m'+1}\}$, having the same boundary. Due to the induction hypothesis applied to
$ \sum_{i=j}^{m'-2}\limits \epsilon_i b_i+\epsilon'_{m'-1}b'_{m'-1}$, we are done. When  $k_i\ne k_{j}$ for all $i=j+1,\ldots,m'+1$, we apply the CR-operation to $\epsilon_{j}b_{j}+\epsilon_{j+1}b_{j+1}$, and similarly we are done.
\end{proof}

\begin{remark/def}
In Lemma \ref{section}, we call $\beta''$, a {\em reduct} of $\beta$. The walk sequence of
$\beta''$ is also called a {\em reduct} of the walk sequence of $\beta$.
So given a chain-walk its reducts are also chain-walks, which are obtained
 by the repeated applications of the CR-operation as described  in Lemma \ref{section}.
\end{remark/def}

\begin{theorem}
\label{chain-type}
Let $\alpha$ be a minimal $2$-chain with the
boundary $f_{12} -f_{02} + f_{01}$.
    \be
    \item Assume $\alpha$ is of NR-type. Then  $|\alpha|=1$ or $|\alpha|\geq 5$. If $|\alpha|\geq 5$ then
any chain-walk in $\alpha$ from $f_{01}$ to $-f_{02}$ is of the form
$\sum_{i=0}^{2n}\limits (-1)^i a_i$ which is as a chain equal to $\alpha$  such that $f_{12}=a_{2j}^{1,2}$ for some $1 \le j \le n-1$.
    \item $\alpha$ is of RN-type iff $\alpha$ is equivalent to a $2$-chain
    $$\alpha'=a_0+\sum_{i=1}^{2n-1}\limits \epsilon_i a_i+a_{2n}$$
    $(n\geq 1)$ which is a chain-walk from $f_{01}$ to $-f_{02}$ such that $\Bd^0 a_{2n}=f_{12}$, $\Bd^1(a_{2n})=-f_{02}$ and  $\supp(a_{2n})=\{0,1,2\}$. (The representation of $\alpha'$ is called {\em standard}.)
    \ee
\end{theorem}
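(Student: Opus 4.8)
\emph{Overall strategy.} Throughout, the idea is to represent $\alpha$, up to the equivalence $\sim$, by a chain-walk and then exploit two local facts. First, in a chain-walk $\sum_i\epsilon_i b_i$ from $f_{01}$ to $-f_{02}$ with walk sequence $\langle 1=k_0,k_1,\dots,k_{m+1}=2\rangle$, the consecutive simplices $b_{i-1},b_i$ already agree on the edge $\CP(\{0,k_i\})$ and that edge cancels in the boundary; hence a CR-operation is applicable to the subsummand $\epsilon_{i-1}b_{i-1}+\epsilon_i b_i$ of $\alpha$ exactly when $k_{i-1}\ne k_{i+1}$, so that $0,k_i,k_{i-1},k_{i+1}$ are four distinct numbers. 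Secondly, any nonzero subsummand $d$ of $\alpha$ with $|\supp(\Bd d)|\le 2$ has a vanishing support, since $|\supp(d)|\ge 3$; in particular a two-simplex subsummand $g-h$ with $\supp(g)=\supp(h)$ whose two simplices agree on two of their three faces has an RS-operation applicable to it. These two observations, together with minimality, do most of the work.

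\emph{Part (1).} Assume $\alpha$ is minimal and of NR-type with $|\alpha|>1$, and pick any chain-walk $\beta=\sum_{i=0}^m\epsilon_i b_i$ from $f_{01}$ to $-f_{02}$ (Remark \ref{existchainwalk}). Since no CR applies, the first observation forces $k_{i-1}=k_{i+1}$ for all interior $i$; with $k_0=1$, $k_{m+1}=2$ and consecutive entries distinct, the walk is $\langle 1,2,1,2,\dots,1,2\rangle$, so $m=2n$ and every $\supp(b_i)=\{0,1,2\}$. The cancellation condition (3) of a chain-walk together with adjacent simplices agreeing on the shared edge forces $\epsilon_{i+1}=-\epsilon_i$, and $(\Bd\epsilon_0 b_0)^{0,1}=f_{01}$ forces $\epsilon_0=+1$. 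Next $\beta=\alpha$: writing $\alpha=\beta+\delta$ and telescoping the $\{0,1\}$- and $\{0,2\}$-faces of $\Bd\beta$ leaves $\Bd\delta=f_{12}-\sum_i(-1)^i b_i^{1,2}$, which is supported only on $\{1,2\}$; so if $\delta\ne 0$ then $\delta$ is a nonzero subsummand with $|\supp(\Bd\delta)|\le 2<3\le|\supp(\delta)|$, contradicting NR-type by the second observation. Hence $\alpha=\sum_{i=0}^{2n}(-1)^i a_i$ with all $\supp(a_i)=\{0,1,2\}$, $a_0^{0,1}=f_{01}$, $a_{2n}^{0,2}=f_{02}$, and reading off the $\{1,2\}$-faces of $\Bd\alpha=f_{12}-f_{02}+f_{01}$ gives $\sum_{i=0}^{2n}(-1)^i a_i^{1,2}=f_{12}$ in the group of $1$-chains; as a multiset identity in the free abelian group this says $\{a_{2i}^{1,2}\}_{i=0}^{n}=\{a_{2i-1}^{1,2}\}_{i=1}^{n}\uplus\{f_{12}\}$, so $f_{12}=a_{2j}^{1,2}$ for a unique $0\le j\le n$. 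Finally $j\ne 0$ and $j\ne n$: if $j=0$ then $\Bd a_0=f_{12}-a_0^{0,2}+f_{01}$, so $\Bd(\alpha-a_0)=a_0^{0,2}-f_{02}$ is supported on $\{0,2\}$ and $\alpha-a_0$ is nonzero; if $a_0^{0,2}=f_{02}$ this contradicts minimality, and otherwise $\alpha-a_0$ (a nonzero subsummand with support $\{0,1,2\}$ but boundary supported on $\{0,2\}$) has the vanishing support $1$, contradicting NR-type; the case $j=n$ is symmetric. Thus $1\le j\le n-1$, which forces $n\ge 2$ and $|\alpha|\ge 5$; together with the remaining alternative $|\alpha|=1$ (a single simplex with faces $f_{01},f_{02},f_{12}$) this proves (1).

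\emph{Part (2), direction $\Leftarrow$.} Since among minimal chains the RN/NR-type is preserved by $\sim$, it suffices to check that a standard $\alpha'$ is of RN-type. Write $\alpha'=\gamma+a_{2n}$ with $\gamma:=a_0+\sum_{i=1}^{2n-1}\epsilon_i a_i$. From $\supp(a_{2n})=\{0,1,2\}$, $a_{2n}^{1,2}=f_{12}$ and $a_{2n}^{0,2}=f_{02}$ we get $\Bd a_{2n}=f_{12}-f_{02}+a_{2n}^{0,1}$, hence $\Bd\gamma=\Bd\alpha'-\Bd a_{2n}=f_{01}-a_{2n}^{0,1}$, supported on $\{0,1\}$. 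But the summand $a_0$ of $\gamma$ has support $\{0,1,k_1\}$ with $k_1\notin\{0,1\}$ (the second entry of the walk sequence), so $k_1$ is a vanishing support of $\gamma$ and $\alpha'$, hence $\alpha$, is of RN-type.

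\emph{Part (2), direction $\Rightarrow$.} Assume $\alpha$ is minimal and of RN-type. Take a maximal chain-walk $\beta$ from $f_{01}$ to $-f_{02}$ in $\alpha$. If $\beta\ne\alpha$, the leftover $\delta=\alpha-\beta$ again has $\Bd\delta$ supported on the single edge $\{1,2\}$, hence has a vanishing support; one uses an RS-operation on $\delta$ (routing it through a fresh vertex) followed by a sequence of CR-operations to amalgamate $\delta$'s simplices into the walk, producing an equivalent chain whose maximal $f_{01}$-to-$-f_{02}$ chain-walk is strictly longer. Since these lengths are bounded by $|\alpha|$, iterating reaches the case $\beta=\alpha$, where the sign computation from Part (1) (which does not use NR-type) gives $\alpha=\sum_{i=0}^{2n}(-1)^i a_i$, a chain-walk from $f_{01}$ to $-f_{02}$ with $n\ge 1$. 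One then applies Lemma \ref{section}-style CR-reductions near its terminal end to pass to an equivalent representative whose last simplex has support $\{0,1,2\}$, and checks (by the minimality argument used for the step $j\ne n$ above, run in reverse) that $f_{12}$ must be the $\{1,2\}$-face of that last simplex — yielding a standard $\alpha'$. The main obstacle is precisely the amalgamation step: showing that the vanishing support of $\delta$ can always be exploited, through a bounded number of RS/CR-operations (invoking amenability to furnish the needed $3$-simplices), to enlarge the chain-walk; the sign/support bookkeeping and the choice of fresh vertices there is the delicate core of the proof, while everything else is a direct application of the two observations above and of minimality.
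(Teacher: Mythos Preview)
Your treatment of Part (1) and the $\Leftarrow$ direction of Part (2) is correct and close to the paper's argument; the paper reaches $\supp(\alpha)=\{0,1,2\}$ more directly by noting that any extra vertex would be a vanishing support of the full chain $\alpha$ itself, but your route via ``no CR applies'' works too.

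The $\Rightarrow$ direction of Part (2) has a genuine gap. First a minor correction: your claim that $\Bd\delta$ is ``supported on the single edge $\{1,2\}$'' is wrong --- the opposite faces $b_i^{k_i,k_{i+1}}$ of a walk centered at $0$ can involve arbitrary vertices $k_i\ne 0$; the correct (and sufficient) statement is that $0\notin\supp(\Bd\delta)$. The serious problem is your final step. After centering at $0$ and arranging the last simplex to have support $\{0,1,2\}$, you assert that ``the minimality argument used for the step $j\ne n$ above, run in reverse'' shows $f_{12}$ is its $\{1,2\}$-face. This is not a valid inference: that argument showed that \emph{if} $f_{12}$ sits at an endpoint then a subchain has a vanishing support --- a derivation of RN-type, not a mechanism for placing $f_{12}$ there. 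In a walk centered at $0$, the edge $f_{12}$ appears only implicitly as the total $\sum_i\epsilon_i b_i^{k_i,k_{i+1}}$; nothing in your outline forces it to occur as a face of any individual simplex, let alone the last one. So contrary to your assessment, the amalgamation is not the only delicate core and ``everything else'' is not direct.

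This is exactly why the paper proceeds differently. It first centers at $2$ (Claim 1), so that $f_{02}$ and $f_{12}$ become the \emph{endpoints} of the walk and $f_{01}$ genuinely occurs as $\Bd^2 c_{j_0}$ for some simplex $c_{j_0}$ in the walk. Claim 2 then performs a careful case analysis on the walk sequence --- repeatedly invoking Lemma \ref{section} to collapse patterns such as $\langle 0,3,0,3,\ldots\rangle$ --- to produce an equivalent chain containing a simplex $c$ with $\supp(c)=\{0,1,2\}$, $\Bd^0 c=f_{12}$, and $\Bd^1 c=f_{02}$. Only after isolating this special $c$ does the paper switch to a chain-walk centered at $0$ terminating in $c$, which is the standard form. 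The construction of $c$ is the genuinely missing ingredient in your outline.
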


\begin{proof}
(1) As mentioned in Remark \ref{existchainwalk},   a chain-walk $\beta$ in $\alpha$ from $f_{01}$ to $-f_{02}$ exists. Now since $\alpha$ is of NR-type,  $\supp(\alpha)=\{0,1,2\}$. If $|\beta|<|\alpha|$ then it follows
$\alpha-\beta$ has  a vanishing support 0, a contradiction. Hence $|\alpha|=|\beta|$ and $\alpha=\beta$.
 Suppose now that  $|\alpha|=3$.  So the chain-walk is $a_0-a_1+a_2=\alpha$ and either $\Bd^0a_0=f_{12}$ or $\Bd^0a_2=f_{12}$. Then
either $\Bd^0 a_1=\Bd^0 a_2$ or  $\Bd^0 a_0=\Bd^0 a_1$. In either case, the subchain of $\alpha$ has a vanishing support 1 or 2, a contradiction. Hence $|\alpha|=1$ or $\geq 5$. When $|\alpha|\geq 5$, all we need to show is that   $f_{12}\ne \Bd^0 a_0$ and $f_{12}\ne \Bd^0 a_{2n}$.  If $f_{12}= \Bd^0 a_0 $
then $\alpha - a_0$ has a vanishing support 1, a contradiction. Hence  $f_{12}\ne \Bd^0 a_0 $. Similarly, we can show $f_{12}\ne \Bd^0 a_{2n} $.

(2)    ($\Leftarrow$) It follows $\supp(\Bd(\alpha'-a_{2n}))= \{0,1\}$, i.e., $\alpha'-a_{2n}$ has a vanishing support, so $\alpha'$ is of RN-type.
 Since CR/RS-operations preserve the minimality and the chain types, $\alpha$ is also an RN-type.
\medskip

($\Rightarrow$)   We prove this in a series of claims. Note that $|\alpha|\geq 3$.
\medskip

  {\em Claim 1.}    There is a 2-chain $\alpha_1\sim \alpha$ which is centered at $2$ such that
$|\supp(\alpha_1)|>3$.
\medskip

{\em Proof of Claim 1.} Let $\alpha_2:=\alpha$ if $|\supp(\alpha)|>3$. Otherwise since $\alpha$ is of RN-type, we can apply RS-operations to obtain some $\alpha_2\sim \alpha$ with $|\supp(\alpha_2)|>3$.
Now there is  $\beta:=\sum_{i\in I}\limits \epsilon_i b_i$, a maximal chain-walk in $\alpha_2$ from $-f_{02}$ to $f_{12}$. If $\beta=\alpha_2$ we put $\alpha_1:=\alpha_2$ and we are done. Otherwise
 let $\gamma:=\alpha_2-\beta$,  and then $\gamma$
 has a vanishing support $2$ in its boundary. By applying the RS-operation to $\gamma$ we find $\gamma'$ with $2\notin\supp(\gamma')$ such that  $\alpha_2\sim \alpha'_2:=\beta+\gamma'$.

 Assume now inductively  we can find a desired $\alpha_1\sim \alpha'_2$ when $|\gamma'|=m$. Let $|\gamma'|=m+1$. Note that $f_{12}-f_{02}+f_{01}\ne \Bd(\beta) $, since otherwise
$\Bd(\gamma')=0$ contradicting the minimality of $\alpha'_2$. Hence there is $i_0\in I$ with $\supp(b_{i_0})=\{2,n_0,n_1\}$
  such that $b_{i_0}^{n_0,n_1}(\ne f_{01})$ with a coefficient, stays in $\Bd(\beta)$.   Therefore there must be a term $\epsilon_{j_0}b_{j_0}$ ($\epsilon_{j_0}\in\{1,-1\})$ in $\gamma'$ such that $(2\notin)\supp(b_{j_0})=\{n_0,n_1,n_2\}$ and $b_{i_0}^{n_0,n_1}=b_{j_0}^{n_0,n_1}$ is cancelled out in $\Bd(\epsilon_{i_0}b_{i_0}+\epsilon_{j_0}b_{j_0})$.
  Now applying the CR-operation to $\epsilon_{i_0}b_{i_0}+\epsilon_{j_0}b_{j_0}$, we get $\epsilon'_{i_0}b'_{i_0}+\epsilon'_{j_0}b'_{j_0}$ with
  $\supp(b'_{i_0})=\{2,n_0, n_2\}, \supp(b'_{j_0})=\{2,n_1, n_2\}$, preserving the boundary. Then from $\beta$, we obtain $\beta'$ by substituting    $\epsilon'_{i_0}b'_{i_0}+\epsilon'_{j_0}b'_{j_0}$ for $\epsilon_{i_0}b_{i_0}$. Notice that $\beta'$ is still a chain-walk from   $-f_{02}$ to $f_{12}$  while $\alpha'_2\sim \alpha''_2:=\beta'+(\gamma'-\epsilon_{j_0}b_{j_0})$. Hence by the induction hypothesis there is a desired $\alpha_1\sim\alpha''_2$. We have proved Claim 1.

\medskip

\begin{figure}[h]
\begin{center}
\scalebox{0.75} 
{
\begin{pspicture}(0,-2.2549999)(11.255,2.2549999)
\rput{-180.0}(10.674374,-0.6949999){\pstriangle[linewidth=0.03,linestyle=dotted,dotsep=0.1cm,dimen=outer](5.337187,-1.8875)(9.7,3.08)}
\psline[linewidth=0.02cm,linestyle=dashed,dash=0.16cm 0.16cm](5.3071876,-1.8475)(1.7071874,1.1725)
\psline[linewidth=0.02cm,linestyle=dashed,dash=0.16cm 0.16cm](5.3271875,-1.8475)(2.9071875,1.1725)
\psline[linewidth=0.02cm,linestyle=dashed,dash=0.16cm 0.16cm](5.3071876,-1.8275)(3.9071877,1.1725)
\psline[linewidth=0.02cm,linestyle=dashed,dash=0.16cm 0.16cm](5.3271875,-1.8075)(4.7471876,1.1725)
\psline[linewidth=0.02cm,linestyle=dashed,dash=0.16cm 0.16cm](5.3071876,-1.8275)(5.7271876,1.1325)
\psline[linewidth=0.02cm,linestyle=dashed,dash=0.16cm 0.16cm](5.3271875,-1.8275)(6.7271876,1.1725)
\psline[linewidth=0.02cm,linestyle=dashed,dash=0.16cm 0.16cm](8.887187,1.1925)(5.3271875,-1.8475)
\psdots[dotsize=0.05](7.087188,0.75249994)
\psdots[dotsize=0.05](7.3871875,0.75249994)
\psdots[dotsize=0.05](7.6671877,0.75249994)
\psdots[dotsize=0.05](7.2671876,1.4125)
\psdots[dotsize=0.05](7.567188,1.4125)
\psdots[dotsize=0.05](7.8471875,1.4125)
\psline[linewidth=0.06cm,arrowsize=0.05291667cm 2.0,arrowlength=1.4,arrowinset=0.4]{cc->}(5.3271875,-1.8675)(0.4871875,1.1925)
\psline[linewidth=0.06cm,arrowsize=0.05291667cm 2.0,arrowlength=1.4,arrowinset=0.4]{cc->}(10.127187,1.1725)(5.3271875,-1.8874999)
\psline[linewidth=0.06cm,arrowsize=0.05291667cm 2.0,arrowlength=1.4,arrowinset=0.4]{cc->}(8.867188,1.1925)(10.187187,1.1925)
\usefont{T1}{ptm}{m}{n}
\rput(5.2814064,-2.0774999){$0$}
\usefont{T1}{ptm}{m}{n}
\rput(0.3996875,1.4325){\small $k_0$}
\usefont{T1}{ptm}{m}{n}
\rput(8.6915,1.4325){\small $k_{2n}$}
\usefont{T1}{ptm}{m}{n}
\rput(10.349688,1.4325){\small $k_{2n+1}$}
\usefont{T1}{ptm}{m}{n}
\rput(1.6796875,1.4325){\small $k_1$}
\usefont{T1}{ptm}{m}{n}
\rput(2.8196876,1.4325){\small $k_2$}
\usefont{T1}{ptm}{m}{n}
\rput(3.8396876,1.4325){\small $k_3$}
\usefont{T1}{ptm}{m}{n}
\rput(4.6796875,1.4325){\small $k_4$}
\usefont{T1}{ptm}{m}{n}
\rput(5.6796875,1.4325){\small $k_5$}
\usefont{T1}{ptm}{m}{n}
\rput(6.6596875,1.4325){\small $k_6$}
\usefont{T1}{ptm}{m}{n}
\rput(1.7114062,0.76250005){$a_0$}
\usefont{T1}{ptm}{m}{n}
\rput(2.7314062,0.76250005){$a_1$}
\usefont{T1}{ptm}{m}{n}
\rput(3.5714061,0.76250005){$a_2$}
\usefont{T1}{ptm}{m}{n}
\rput(4.4514065,0.76250005){$a_3$}
\usefont{T1}{ptm}{m}{n}
\rput(5.1914062,0.76250005){$a_4$}
\usefont{T1}{ptm}{m}{n}
\rput(6.0314064,0.76250005){$a_5$}
\usefont{T1}{ptm}{m}{n}
\rput(9.061406,0.76250005){$a_{2n}$}
\usefont{T1}{ptm}{m}{n}
\rput{-90.0}(-1.3484377,2.1565628){\rput(0.38625,1.7434376){\small $=$}}
\usefont{T1}{ptm}{m}{n}
\rput{-90.0}(6.935562,10.440563){\rput(8.67025,1.743438){\small $=$}}
\usefont{T1}{ptm}{m}{n}
\rput{-90.0}(8.579562,12.084563){\rput(10.314251,1.743438){\small $=$}}
\usefont{T1}{ptm}{m}{n}
\rput(0.4193125,2.0665624){$1$}
\usefont{T1}{ptm}{m}{n}
\rput(8.703313,2.0665624){$1$}
\usefont{T1}{ptm}{m}{n}
\rput(10.347313,2.0665624){$2$}
\usefont{T1}{ptm}{m}{n}
\rput(8.4193125,-0.39749995){$-f_{02}$}
\usefont{T1}{ptm}{m}{n}
\rput(2.5673125,-0.39743745){$f_{01}$}
\usefont{T1}{ptm}{m}{n}
\rput(9.355312,1.4825001){$f_{12}$}
\end{pspicture}  
}
\end{center}
       \caption{A standard RN-type $2$-chain}
\end{figure}
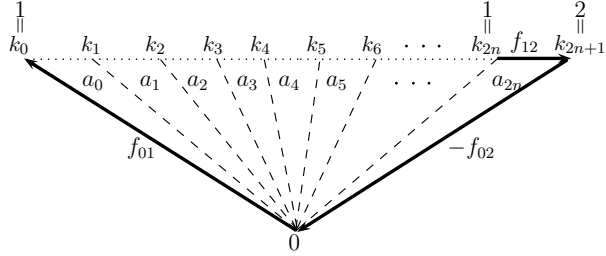

{\em Claim 2.} There is a 2-chain $\alpha_2\sim\alpha_1$ that has a 1-simplex term $c$ (with the coefficient $1$) such that
$\supp(c)=\{0,1,2\}$, and  $f_{12}-f_{02}=
\Bd^0(c)-\Bd^1(c)$.

\medskip

{\em Proof of Claim 2.} For notational simplicity, let $\{0,1,2,3\}\subseteq \supp(\alpha_1)$, and write $\alpha_1=\sum_{i=0}^{2n}\limits\epsilon_i c_i$, a chain-walk from
$-f_{02}$ to $f_{12}$. So for some $j_0\leq 2n$, we have  $\epsilon_{j_0}=1$, $\supp(c_{j_0})=\{0,1,2\}$, and
$\Bd^2(c_{j_0})=f_{01}$.
Let $\beta_0:=\sum_{i=0}^{j_0-1}\limits\epsilon_i c_i$, and $\beta_1:=\sum_{i=j_0+1}^{2n}\limits\epsilon_i c_i$. We shall find the desired $c$ (and $\alpha_2$) by applying the process in Lemma \ref{section} and finding reducts of
chain-walks, starting from $\alpha_1$. Each time, the reduced chain-walk together with the deleted  terms is equivalent to  $\alpha_1$.


\medskip

Case 1) $3\notin \supp(\beta_1)$: So $3\in \supp(\beta_0)$. Now let $I_0:=\langle 0,\ldots,3,\ldots,0\rangle$ be the walk sequence of $\beta_0$, and let $I_1=\langle 1,\ldots,1\rangle$ be the walk sequence of $\beta_1$. So $I_0I_1$ is the walk sequence of $\alpha_1$.  Now  $I_0=J_0J_1$ such that $J_1$ starts with $3$ but all other components $\ne 3$. Then due to Lemma \ref{section} (applied to $J_1I_1$),  we can find  $\gamma_1$, a reduct of  $\alpha_1$, whose walk sequence is $J_0\langle 3,1\rangle.$ Now $J_0=\langle 0,\ldots\rangle$.

If $3$ is not in $J_0$ then again by Lemma \ref{section}, we can further find a reduct of $\gamma_1$  whose walk sequence is $\la 0,3,1\ra$, then again further reduce it with the walk sequence $\la 0,1\ra$, and we are done.

If $3$ is in $J_0$ then in general, by finding a sequence of all $3$'s in $J_0$ and applying Lemma \ref{section}, we can  reduce $J_0$ to a sequence of the form $J'_0=\la 03,k_13,k_23,\ldots;k_{\ell}\ra$ where each $k_i\ne 3$.
If none of the $k_i$'s is $0$ then by applying Lemma \ref{section}
again to $J'_0\la3,1\ra$ we directly reduce it to $\la 0,1\ra$ and we
are done. Otherwise, one of the $k_i$'s is $0$, and we can similarly reduce $J'_0$ to a sequence of the form $\la 03,03,\ldots;k_\ell \ra$. Now the reduced walk sequence is $\la 03,03,\ldots;k_{\ell}; 3,1\ra$.
If $k_{\ell}\ne 1$ then it can directly be reduced to $\la 0,1\ra$ and we are done. If  $k_{\ell}=1$ then
it can be reduced to $\la0,1;3,1\ra$ and further reduced to $\la 0,3,1\ra$ and to $\la 0,1\ra$, so we are done.

 \medskip

   Case 2) $3\notin \supp(\beta_0)$: Then $3\in \supp(\beta_1)$ and the proof will be similar to Case 1.

\medskip

   Case 3) $3\in \supp(\beta_0)\cap \supp(\beta_1)$: By an argument
   similar to that in   Case 1,  the walk sequence of $\alpha_1$ can be in general reduced to $I=\la 03,03,\ldots;k,3\ra\la0,1\ra\la3,k';31,\ldots,31\ra$. Now by the argument in the last part of the proof of Case 1,
  $ \la 03,03,\ldots;k,3\ra\la0,1\ra$ can be reduced to $\la 0,1\ra$. Hence $I$ can be reduced to $\la 0,1\ra\la
  3,k';31,\ldots,31\ra$. Then by the same argument it can finally be reduced to $\la 0,1\ra$, and we have proved Claim 2.

  \medskip

Now lastly we simply take a chain-walk $\gamma$ from $f_{01}$ to $-f_{02}$ in $\alpha_2$ terminating with $c$ ($=$ the $1$-simplex described in Claim 2). Then by an argument similar to that in the proof of Claim 1, we repeatedly  apply the CR-operation to $\gamma$ (while keeping $c$ unchanged), and obtain a desired $\alpha'\sim \alpha_2$ centered at $0$ forming  a chain-walk from $f_{01}$ to $-f_{02}$.
Then we  take the reverse order of the representation of the chain-walk $\alpha'$.
\end{proof}

In an upcoming paper \cite{KL}, it is shown that for any minimal
$2$-chain whose boundary is a $1$-shell, there is an equivalent
$2$-chain which has the same boundary with support size three.


\section{Examples}

 This section is devoted to exhibiting a certain family of examples of $2$-chains of types in rosy theories whose boundaries are 1-shells. The existence of these examples implies that, in rosy theories, there is no uniform bound for the minimal lengths of 2-chains having 1-shell boundaries.

We recall the examples described in \cite{CLPZ}. For a positive
integer $n$, consider a (saturated) structure $M_n=(|M_n|;S,g_n)$, where $|M_n|$ is a circle; $S$ is a ternary relation such that $S(a,b,c)$ holds iff $a,b,c$ are distinct and $b$ comes before $c$ going around the circle clockwise starting at $a$; and $g_n$ is a rotation (clockwise) by $2\pi/n$-radians. 
 When $n$ is obvious from context, $g_n$ is often written  as $g$. The following Fact \ref{basicfact1}, \ref{basicfact2} are from \cite{CLPZ}. 

\begin{fact}\label{basicfact1} 
    \be 
        \item  $\Th(M_n)$  has the unique 1-complete type $p_n(x)$ over $\emptyset$, which is isolated by the formula
        $x=x$.

        \item $\Th(M_n)$ is $\aleph_0$-categorical and has quantifier-elimination.

        \item For any subset $A\subset M_n$, $\acl(A)=\dcl(A)=\bigcup_{0\le
        i<n} g_n^i(A)$ (in the home-sort), where $g_n^i=\underbrace{g_n \circ \cdots \circ g_n}_{i~\text{times}}$.
        \item For each $a\in M_n$ with $n>1$, and an integer $i$, $S(g^i(a),x,g^{i+1}(a))$ isolates a complete type over $a$.
    \ee
\end{fact}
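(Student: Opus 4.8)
\emph{Proof strategy.} The plan is to prove part (2), and in particular quantifier elimination, first, and then to read off (1), (3) and (4) from QE together with an explicit description of the quantifier-free types. I would fix the concrete copy $|M_n| = \BR/\BZ$ with $g_n(x) = x + \tfrac1n$ and $S$ the clockwise cyclic order, note that the reduct $(|M_n|;S)$ is a dense cyclic order without endpoints --- well known to be $\aleph_0$-categorical with QE by a back-and-forth argument as for dense linear orders --- and record the first-order sentences that $\Th(M_n)$ visibly proves: $g_n$ is an automorphism of $S$, $g_n^n = \id$, and $g_n^k \neq \id$ for $0 < k < n$. The last two force every $g_n$-orbit in every model of $\Th(M_n)$ to have size exactly $n$; hence, since $g_n$ preserves $S$, in any such model the orbit of a point $a$ cuts the circle into $n$ arcs which $g_n$ permutes by a single $n$-cycle. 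Also, since $g_n^n = \id$, every term is provably equal to $g_n^i(x)$ for a unique $0\le i<n$, so the atomic formulas are exactly $g_n^i(x_j)=g_n^{i'}(x_{j'})$ and $S(g_n^{i_1}(x_{j_1}),g_n^{i_2}(x_{j_2}),g_n^{i_3}(x_{j_3}))$.

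For QE I would apply the one-variable elimination test: show that for quantifier-free $\varphi(\bar x, y)$ the formula $\exists y\,\varphi(\bar x,y)$ is equivalent modulo $\Th(M_n)$ to a quantifier-free formula. The crucial point is that the quantifier-free type of $y$ over $\bar x$ has only finitely many possibilities, each itself quantifier-free definable over $\bar x$: either $y = g_n^k(x_j)$ for some $j,k$, or the orbit of $y$ is disjoint from the finite $g_n$-invariant set $\widehat{\bar x}:=\{g_n^k(x_j)\}$, in which case the type is determined by which of the arcs cut out by $\widehat{\bar x}$ contains $y$ --- because then the remaining orbit points $g_n^i(y)$ sit one apiece in the $g_n$-images of that arc, so the whole cyclic interleaving of $\{g_n^i(y)\}$ with $\widehat{\bar x}$ is forced. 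Density of the cyclic order makes every such type realizable, so $\exists y\,\varphi$ reduces to the disjunction of the defining formulas of those types consistent with $\varphi$. I expect this realization step --- the bookkeeping of how the dense cyclic order interacts with the finite-order rotation $g_n$ --- to be the only genuinely technical part of the whole fact; everything else is soft.

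Granting QE, $\aleph_0$-categoricity in (2) is Ryll--Nardzewski: for each $m$ there are finitely many quantifier-free $m$-types (an equality pattern among the $\le nm$ points $g_n^i(x_j)$, then a cyclic ordering of the resulting points), and by QE these are all the complete $m$-types. For (1): the quantifier-free $1$-type of a single $a$ is rigid --- $g_n^i(a)=g_n^j(a)$ iff $i\equiv j \pmod n$, and for distinct residues $S(g_n^i(a),g_n^j(a),g_n^k(a))$ holds iff the residues of $j-i$ and $k-i$ in $\{1,\dots,n-1\}$ satisfy $j-i<k-i$ --- so there is a unique complete $1$-type over $\emptyset$, necessarily isolated by $x=x$. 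For (3): by QE every $A$-definable subset of the home sort (using finitely many parameters from $A$) is a finite union of points of $\widehat A:=\bigcup_{0\le i<n}g_n^i(A)$ and open arcs with endpoints in $\widehat A$; a finite such set is contained in $\widehat A$, so $\acl(A)\subseteq\widehat A$, while $\widehat A\subseteq\dcl(A)$ is immediate, giving $\widehat A\subseteq\dcl(A)\subseteq\acl(A)\subseteq\widehat A$, whence equality.

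Finally, for (4), when $n>1$ the points $g^i(a)$ and $g^{i+1}(a)$ are distinct and consecutive (clockwise) on the orbit of $a$, so the arc $(g^i(a),g^{i+1}(a))$ is nonempty by density. For any $b$ in that arc, $g^j(b)$ lies in the arc $(g^{i+j}(a),g^{i+j+1}(a))$ for each $j$, so --- exactly as in the QE analysis --- the quantifier-free type, hence by QE the complete type, of $b$ over $a$ is completely determined; and a point satisfies $S(g^i(a),x,g^{i+1}(a))$ precisely when it lies in that arc. Therefore $S(g^i(a),x,g^{i+1}(a))$ isolates a complete type over $a$.
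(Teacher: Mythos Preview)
The paper does not prove this statement: it is quoted as a fact from \cite{CLPZ} (see the sentence immediately preceding Fact~\ref{basicfact1}), so there is no in-paper proof to compare against. Your outline is correct and would serve as a self-contained proof along the standard lines---establish quantifier elimination first, then read off (1), (3), (4) and $\aleph_0$-categoricity.

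Two small points worth tightening. First, to conclude that every $g_n$-orbit has size exactly $n$ you need the pointwise axiom $\forall x\,(g_n^k(x)\neq x)$ for each $0<k<n$, not merely ``$g_n^k\neq\id$''; the stronger statement holds in $M_n$ because a nontrivial rotation of the circle has no fixed points, and it is what your argument actually uses. Second, for the claim that the orbit of $a$ is cut into $n$ arcs permuted by $g_n$ in a single $n$-cycle (equivalently, that $g_n(a)$ is the immediate clockwise orbit-successor of $a$), you are implicitly invoking sentences such as $\forall a\,\bigwedge_{2\le k\le n-1}\neg S(a,g_n^k(a),g_n(a))$; these hold in $M_n$ and hence in $\Th(M_n)$, but in an arbitrary model of the weaker axioms you listed one could a priori have the orbit arranged as $a,g_n^k(a),g_n^{2k}(a),\ldots$ for some $k$ coprime to $n$. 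Once these two axiom schemes are recorded, your key observation---that the quantifier-free type of $y$ over $\bar x$ is completely determined by which arc of $M_n\setminus\widehat{\bar x}$ contains $y$, since then $g_n^i(y)$ lands in the $g_n^i$-translate of that arc and these translates are pairwise distinct---goes through cleanly, and parts (1), (3), (4) follow exactly as you say.
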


 In what follows, we assume $n>1$.
 


\begin{fact}\label{basicfact2}
    \be
        \item There are $a,b\in M_n$ such that $d(a,b)>n/2$.
        \item For any $a,b\in M_n$, the following are equivalent:
        	\be
        	\item[(i)] $a,b$ begin some $\emptyset$-indiscernible sequence,
        	\item[(ii)] $a$ and $b$ have the same type over some elementary substructure of $M_n$,
        	\item[(iii)]  $a=b\vee S(a,b,g_n(a))\vee S(b,a,g_n(b))$ holds.
        	\ee
\ee
\end{fact}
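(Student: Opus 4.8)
The plan is to route everything through the ``one rotation step'' relation $R(a,b)$, meaning that $b$ lies in the open arc $(g_n^{-1}(a),g_n(a))$ around $a$. First I would observe that condition (iii) is precisely $R(a,b)$ written out in terms of $S$: unwinding the definition of $S$, the disjunct $S(a,b,g_n(a))$ says $b\in(a,g_n(a))$ and $S(b,a,g_n(b))$ says $a\in(b,g_n(b))$, i.e. $b\in(g_n^{-1}(a),a)$, and together with $a=b$ these exhaust $b\in(g_n^{-1}(a),g_n(a))$. Also, by the definition of $d$ recalled in Section~\ref{sec:h1}, clause (i) is literally ``$d(a,b)\le 1$''. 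So for part (2) it suffices to prove the four implications $R(a,b)\Rightarrow$(i), (i)$\Rightarrow R(a,b)$, $R(a,b)\Rightarrow$(ii), (ii)$\Rightarrow R(a,b)$; then part (1) will follow by iterating $R$.

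For $R(a,b)\Rightarrow$(i): if $a=b$ this is trivial; otherwise, by symmetry, suppose $b\in(a,g_n(a))$. I would pick points $a=c_0\prec c_1\prec c_2\prec\cdots$ all lying in the single arc $(a,g_n(a))$ with $c_1=b$, and check that $\langle c_i:i<\omega\rangle$ is $\emptyset$-indiscernible: since all the $c_i$ lie in one arc ``of length $1/n$'', the $g_n$-translates of any increasing tuple $c_{i_0}\prec\cdots\prec c_{i_k}$ occur around the circle in the cyclic pattern $c_{i_0}\prec\cdots\prec c_{i_k}\prec g_n(c_{i_0})\prec\cdots\prec g_n(c_{i_k})\prec g_n^2(c_{i_0})\prec\cdots$, which depends only on $k$; hence by quantifier elimination (Fact~\ref{basicfact1}(2)) all such tuples have the same type. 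For (i)$\Rightarrow R(a,b)$: extend the given $\emptyset$-indiscernible sequence to one with at least three terms $c_0=a$, $c_1=b$, $c_2$. Indiscernibility gives $\tp(c_1/c_0)=\tp(c_2/c_0)=:q$, while $\tp(c_2/c_1)$ is the image of $q$ under an automorphism sending $c_0$ to $c_1$. By Fact~\ref{basicfact1}(3),(4), $q$ is either ``$x=g_n^j(a)$'' or ``$x\in(g_n^j(a),g_n^{j+1}(a))$'' for some $j$. In the first case $c_1=g_n^j(a)$ and $c_2=g_n^j(a)=g_n^{2j}(a)$ force $j\equiv 0\pmod n$, i.e. $b=a$. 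In the second case $c_2$ lies in $(g_n^j(c_0),g_n^{j+1}(c_0))\cap(g_n^j(c_1),g_n^{j+1}(c_1))$; since $c_1$ lies in the $j$-th arc of $c_0$, the second of these arcs is contained in $(g_n^{2j}(c_0),g_n^{2j+2}(c_0))$, so the $j$-th arc of $c_0$ meets the $2j$-th or the $(2j+1)$-st, and as the $n$ open arcs of $c_0$ are pairwise disjoint this forces $j\equiv 0$ or $j\equiv -1\pmod n$, i.e. $b\in(a,g_n(a))$ or $b\in(g_n^{-1}(a),a)$; either way $R(a,b)$ holds.

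For $R(a,b)\Rightarrow$(ii): the case $a=b$ is trivial, so assume $a\ne b$; then $b\notin\acl(a)$ and the short arc $\widehat A_0$ between $a$ and $b$ has ``length'' strictly less than $1/n$, so the $n$ closed arcs $\widehat A_i:=g_n^i(\widehat A_0)$ are pairwise disjoint and leave $n$ nonempty complementary open arcs $\widehat B_i=g_n^i(\widehat B_0)$. I would fix a countable dense-in-itself subset $D_0$ of the interior of $\widehat B_0$ with no largest and no smallest element and set $M:=\bigcup_i g_n^i(D_0)$; this is a substructure closed under $g_n$, and, as a cyclic order, it is dense --- the only nonobvious point is density ``across'' the gaps $\widehat A_i$, and this holds because each piece $M\cap\widehat B_i$ has no endpoints. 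Since $T$ has quantifier elimination, every substructure that is a model is elementary, so $M\prec M_n$; moreover $a$, $b$ and all their $g_n$-translates avoid $M$ and sit in ``the same gaps'' of $M$, so $\tp(a/M)=\tp(b/M)$ by quantifier elimination, i.e. $a\equiv_M b$. For (ii)$\Rightarrow R(a,b)$: let $a\equiv_M b$ with $M\prec M_n$; if $a=b$ we are done, and if $a$ or $b$ lies in $M$ then $a\equiv_M b$ forces $a=b$. Otherwise $a$, $b$ and all their $g_n$-translates avoid $M$, so by quantifier elimination $\tp(a/M)=\tp(b/M)$ means $a$ and $b$ determine the same cut of the dense cyclic order $M$; hence one of the two arcs between $a$ and $b$ misses $M$. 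But one checks that if an arc between $a$ and $b$ contains a whole basic arc $(g_n^i(a),g_n^{i+1}(a))$, then its $n$ $g_n$-translates already cover all of $M_n$, so such an arc cannot miss the nonempty set $M$; and if $b\notin(g_n^{-1}(a),g_n(a))$ then both arcs between $a$ and $b$ contain a whole basic arc --- contradiction. Hence $R(a,b)$.

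Finally, for part (1): by part (2), $d(a,b)\le 1$ iff $b$ lies in the open arc $(g_n^{-1}(a),g_n(a))$ of ``radius $1/n$'' about $a$, and by induction on $k$ --- using the $g_n$-symmetry exactly as above --- $d(a,b)\le k$ iff $b$ lies in the open arc of ``radius $k/n$'' about $a$: that is, $(g_n^{-k}(a),g_n^k(a))$ when $2k<n$, all of $M_n$ except the antipode $g_n^{n/2}(a)$ when $2k=n$, and all of $M_n$ when $2k>n$. The essential point, which makes the inequality strict, is that the endpoints $g_n^{\pm k}(a)$ are always excluded (a single step never reaches a point lying exactly $1/n$ away). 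Thus $b:=g_n^{\lfloor n/2\rfloor}(a)$, sitting at ``distance $\lfloor n/2\rfloor/n$'' from $a$, i.e. on the boundary of the $\lfloor n/2\rfloor$-ball, satisfies $d(a,b)\ge\lfloor n/2\rfloor+1>n/2$, as required. I expect the main obstacle to be the construction in $R(a,b)\Rightarrow$(ii): one must produce a genuine elementary substructure $M$ that is simultaneously blind to $a$, to $b$, and to all of their $g_n$-translates, and the delicate point is that a dense sub-cyclic-order cannot contain a point sitting at the very edge of a gap --- this is what forces the choice of $D_0$ without endpoints. The matching subtlety in part (1) is that the $k$-step reachable set is an \emph{open}, not closed, arc, which is precisely what upgrades ``$\ge n/2$'' to ``$>n/2$''.
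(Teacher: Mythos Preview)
The paper does not prove Fact~\ref{basicfact2} at all: it is simply quoted from \cite{CLPZ}, so there is no in-paper argument to compare against. Your proposal is a correct self-contained proof, and the decomposition via the ``one-step'' relation $R(a,b)$ is exactly the right organizing idea.

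A couple of minor points worth tightening. In the implication $R(a,b)\Rightarrow$(ii), you assert that ``every substructure that is a model is elementary'' and then only verify density of $M$; strictly speaking you should also note that the remaining axioms of $\Th(M_n)$ (cyclic-order axioms, $g_n$ an $S$-automorphism of exact order $n$, orbits in cyclic order) are universal and hence inherited by substructures, so density is indeed the only thing to check. Alternatively one can bypass the ``$M$ is a model'' step entirely and verify Tarski--Vaught directly: by QE a $1$-type over finitely many parameters $\bar m\in M$ is determined by the arc of $\{g^i(m_j)\}$ in which $x$ lies, and the density of $M$ you established guarantees every such arc meets $M$. In the implication (ii)$\Rightarrow R(a,b)$, before invoking your covering argument you use implicitly that $b\notin\acl(a)$; this does hold (pick $m\in M$ with $m\in(a,g_n(a))$, possible since $a\notin M=\acl(M)$, and observe $g_n^i(a)\not\equiv_M a$ for $0<i<n$), but it deserves a line. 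With these small additions your argument is complete, and the open-arc description of $\{b:d(a,b)\le k\}$ that you derive is precisely what makes the strict inequality in part~(1) go through.
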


Thus the unique 1-complete type $p_n$ is also a Lascar type.

\begin{theorem}\label{weak} 
\be\item
$\Th(M_n)$ has weak elimination of
    imaginaries.
\item    
   $\Th(M_n)$ is rosy having thorn $U$-rank $1$ with a trivial pregeometry. 
    \ee
\end{theorem}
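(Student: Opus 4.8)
The plan is to treat the two parts separately; part (2) follows quickly from Fact \ref{basicfact1}(3), whereas part (1) requires an analysis of the definable sets via quantifier elimination. For part (2), I would first observe that $\acl$ on the home sort of $M_n$ is a pregeometry: by Fact \ref{basicfact1}(3), for any set $A$ and elements $b,c$ we have $\acl(Ab)=\acl(A)\cup\{g^i(b)\mid 0\le i<n\}$, so if $c\in\acl(Ab)\setminus\acl(A)$ then $c=g^i(b)$ for some $i<n$, whence $b=g^{n-i}(c)\in\acl(Ac)$, which is exchange; finite character and monotonicity are clear. Triviality of the pregeometry is likewise immediate from Fact \ref{basicfact1}(3): since each $g^i$ is a total unary function, $\acl(X)=\bigcup_{0\le i<n}g^i(X)=\bigcup_{x\in X}\bigcup_{0\le i<n}g^i(x)=\bigcup_{x\in X}\acl(x)$. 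Since $\Th(M_n)$ is $\aleph_0$-categorical it eliminates $\exists^{\infty}$; combined with exchange, this makes $\acl$-independence satisfy the axioms for a rosy independence relation in the sense of \cite{EO} (especially transparent because the pregeometry is trivial), so $\Th(M_n)$ is rosy. Finally, a forking extension of the unique $1$-type $p_n$ over $\emptyset$ must be algebraic (a nonalgebraic extension adds no new point to $\acl$), while $p_n$ is nonalgebraic; hence $p_n$, and so $\Th(M_n)$, has thorn $U$-rank $1$.

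For part (1), every imaginary of $\Th(M_n)$ has the form $\bar a/E$ where $\bar a$ is a home-sort tuple and $E$ is a $\emptyset$-definable equivalence relation on some $M_n^k$, so it suffices to produce, for each such $E$, a $\emptyset$-definable map $\rho$ from $M_n^k$ to the finite subsets of $M_n$ with $\bar x\,E\,\bar y\iff\rho(\bar x)=\rho(\bar y)$. Given such a $\rho$, the finite real set $\rho(\bar a)$ is setwise fixed by exactly the automorphisms fixing $\bar a/E$, so it is interdefinable with $\bar a/E$ in $M_n^{\eq}$; then any enumeration $\bar c$ of $\rho(\bar a)$ is a home-sort tuple with $\bar a/E\in\dcl^{\eq}(\bar c)$ and $\bar c\in\acl^{\eq}(\bar a/E)$, i.e.\ weak elimination of imaginaries. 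To build $\rho$ I would use quantifier elimination: $E$ is quantifier-free definable, so each $E$-class is a Boolean combination of cells, each a product of $g$-translates of coordinates and open arcs bounded by $g$-translates of coordinates; the candidate invariant is $\rho(\bar a):=\acl(\bar a)$ decorated by the $\emptyset$-definable part of the quantifier-free configuration of $\bar a$ that $E$ actually sees. That $\rho(\bar x)=\rho(\bar y)\Rightarrow\bar x\,E\,\bar y$ is checked using density of $S$ on the circle — in particular the endpoints of an arc $(a,b)$ are recovered from the arc as a definable set, so no information is lost — and the converse holds since $E$ depends only on the quantifier-free type. The one piece of information not recoverable is the labelling/orientation inside $\acl(\bar a)$, which is exactly why one obtains weak but not full elimination; concretely, the $\langle g\rangle$-orbit $\acl(a)$ of a single $a\models p_n$ is an imaginary with no home-sort element in its definable closure, so full elimination genuinely fails.

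The main obstacle is the bookkeeping in part (1): one must define the coding map $\rho$ uniformly over all $\emptyset$-definable equivalence relations on all powers $M_n^k$ and verify carefully that equality of codes forces $E$-equivalence. Part (2), by contrast, is an essentially routine consequence of Fact \ref{basicfact1}(3) together with the characterization of rosiness via an $\acl$-type independence relation.
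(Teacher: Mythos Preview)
Your treatment of part (2) is essentially sound and close in spirit to the paper, though the paper takes the opposite logical order: it first proves (1) and then invokes the standard fact from \cite{EO} that a surgical ($\acl$ satisfies exchange) theory with weak elimination of imaginaries is rosy of thorn $U$-rank $1$. Your attempt to verify rosiness directly from the home-sort pregeometry, bypassing (1), is not quite complete: rosiness is a statement about thorn-independence in $T^{\eq}$, and ``$\acl$-independence satisfies the axioms'' must be checked over imaginary parameters, which is exactly what weak elimination of imaginaries buys you. So as written your (2) still leans on (1), just as the paper's does.

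For part (1) your strategy and the paper's genuinely diverge, and here there is a real gap. The paper does \emph{not} try to analyse a general $\emptyset$-definable equivalence relation $E$ cell-by-cell. Instead it proves a clean canonical-base lemma: if $D\subseteq (M_n)^k$ is definable over each of two $\acl$-closed sets $A_0,A_1$, then $D$ is definable over $A_0\cap A_1$. The case $k=1$ is immediate from quantifier elimination (endpoints of the finitely many arcs lie in each $\dcl(A_i)$), and the inductive step uses $\aleph_0$-categoricity to make the fibrewise definitions uniform. Weak elimination then falls out: take $\bar b$ to be those elements of $\acl(\bar a)$ algebraic over $\bar a/E$, choose a conjugate $\bar a''\equiv_{\acl(\bar a/E)}\bar a'$ meeting $\bar a'$ exactly in $\bar b$, and apply the lemma to the $E$-class of $\bar a$.

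By contrast, your plan is to produce for each $E$ a $\emptyset$-definable coding map $\rho$ into finite subsets with $\bar x\,E\,\bar y\iff\rho(\bar x)=\rho(\bar y)$. That is a correct reformulation of weak elimination, but you never actually build $\rho$: ``$\acl(\bar a)$ decorated by the $\emptyset$-definable part of the quantifier-free configuration that $E$ actually sees'' is a description of what $\rho$ ought to accomplish, not a definition, and you yourself flag the required bookkeeping as the main obstacle. Already for $k=1$ one sees the difficulty (an arc and its complement share endpoints; the trivial relation must map to a constant, not to $\acl(\bar a)$), and in higher arity the cell decomposition of an arbitrary $E$-class becomes unwieldy. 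The paper's intersection lemma sidesteps all of this by never looking inside $E$; I would recommend adopting that argument rather than attempting the direct coding.
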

\begin{proof} (1) We claim that if a   set $D$ in $(M_n)^k$ is definable over $A_0$ and $A_1$ respectively where $A_i=\acl(A_i)=\dcl(A_i)$ (in the home-sort) then it is definable over $B:=A_0\cap A_1$:   We sketch the proof of the claim by freely using Fact \ref{basicfact1}. Let $k=1$.
  Due to quantifier elimination, $D$ is some union of finitely many arcs on $M_n$.
    Clearly each end-point of a connected component of $D$ is in $\dcl(A_i)$ and so in $B$ as well. Hence $D$ is indeed  $B$-definable. Now for induction, assume the claim holds for $k-1$. We want to show it holds for $k$.    
   Suppose that $\varphi_i(x_1,\ldots,x_k,\bar a_i)$ defines
   $D$ where $\bar a_i\in A_i$. Then, for each element $b$, the set $D_b$ defined by $\varphi_i(x_1,\ldots,x_{k-1},b,\bar a_i)$ is definable over $Bb$, by the induction hypothesis. But due to $\aleph_0$-categoricity (so there are only finitely many formulas over $\emptyset$ up to equivalence), it easily follows that for each $y$, 
   $\varphi_i(x_1,\ldots,x_{k-1},y,\bar a_i)$ is definable over $B$, i.e. $D$ is definable over $B$ as we wanted. 
  
\medskip

Now let $E(\bar x,\bar y)$ be an $\emptyset$-definable equivalence relation on $(M_n)^k$.  For
$\bar a\in (M_n)^k$, let $\bar a'$ denote a finite tuple of algebraic closure of $\bar a$ in the home-sort. Let $\bar b$ be the maximal subtuple of  $ \bar{a}'$ which is algebraic over $\bar a/E$. Thus there is $\bar a''\equiv_{\acl(\bar a/E)}\bar a'$ such that $\bar b=\bar a'\cap \bar a''$ as sets. Hence due to the claim,
$\bar a/E\in \dcl^{\eq}(\bar b)$ and $\bar b\in \acl(\bar a/E)$. We have proved (1). 

    
    (2) Due to (1), $\Th(M_n)$ is rosy having thorn $U$-rank $1$ as pointed out in \cite{EO}.
Notice that $M_n$ has the same pregeometry as the  $n$-copies of a
half-closed interval, and so $M_n$ forms a trivial pregeometry with
its algebraic closures.
\end{proof}


\begin{definition}
Let $a,b\in M_n$ be any elements with $\acl(a)\neq\acl(b)$.
\begin{enumerate}
\item We define the
{\em $S$-distance of $b$ from $a$}, denoted by $\sdist(a,b)$ as
follows: $\sdist(a,b)=k$ iff $M_n \models
S(g^k(a),b,g^{k+1}(a))$. For integers $k<l$, we write
$k\le\sdist(a,b)\le l$ if $M_n \models S(g^k(a),b,g^{l+1}(a))$.
\item We define the {\em $\widehat{S}$-distance of $b$ from $a$},
  denoted by $\bsdist(a,b)$, as similar manner as $\sdist(a,b)$, using
  the formula $$\widehat{S}(x,y,z)\equiv (x\neq z \wedge S(x,y,z))
  \vee (x=z\wedge x\neq y).$$ 
\end{enumerate}
\end{definition}

\begin{remark} \label{trianglebridge}
Let  $x, y, z\in M_n$ have mutually disjoint algebraically
closures. Then for any  $k, l, m\in \mathbb{Z}$,
\be 
 \item[$(1)$] $\sdist(y,x)=-\sdist(x,y)-1$;
 \item[$(2)$] 
    \be
            \item for $l-k\not\equiv -1,0 \pmod n$, if $k\le\sdist(x,y)\le l-1$, and $\sdist(y,z)=m$, then $m+k\le \sdist(x,z)\le m+l$;
            \item for $l-k\equiv -1 \pmod n$, if $k\le\sdist(x,y)\le l-1$, and $\sdist(y,z)=m$, then $g^{k+m}(x)\neq z$.
        \ee

   \item[$(1)'$] $\bsdist(y,x)=-\bsdist(x,y)-1$;

   \item[$(2)'$]  for $k\not\equiv l \pmod n$, if $k\le\bsdist(x,y)\le
      l-1$, and $\bsdist(y,z)=m$, then $m+k\le \bsdist(x,z)\le
      m+l$.
\ee

\end{remark}

\begin{lemma}\label{trianglebridget}
Let $k $ and $l_0, \ldots, l_m$ be fixed integers and $L_j :=
\sum_{i=0}^j l_i$. Let $a$ and $d_0,\ldots,d_{m+1}$ $(m+1<n)$ be elements in $M_n$ such that
$$(*)_{m} : \bsdist(a,d_0)=k,\ \bsdist(d_i,d_{i+1})=l_i, \ 0\leq i \leq m.$$
Then $$k+L_m \le \bsdist(a,d_{m+1})\le k+L_m +m+1.$$
Moreover, by choosing appropriate elements for $a$ and
$d_0,\ldots,d_{m+1}$, the quantity $\bsdist(a,d_{m+1})$ can be made to
be any integer in $[\:k+L_m,\:k+L_m+m+1\:]$ $(**)_m$.
\end{lemma}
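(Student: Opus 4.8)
The plan is to prove both the bound $(*)_m$ and the realizability $(**)_m$ simultaneously by induction on $m$, using Remark \ref{trianglebridge}$(2)'$ as the essential tool at each step. For the base case $m=0$, the hypothesis reads $\bsdist(a,d_0)=k$ and $\bsdist(d_0,d_1)=l_0$, and since $x=a$, $y=d_0$, $z=d_1$ have mutually disjoint algebraic closures (we may arrange this, or it follows from $m+1<n$ together with the genericity we build into the construction), I would like to apply $(2)'$. To do so I rewrite $\bsdist(a,d_0)=k$ as the two-sided inequality $k\le\bsdist(a,d_0)\le (k+1)-1$, so that $k\not\equiv k+1\pmod n$ automatically; then $(2)'$ with the roles $k\mapsto k$, $l\mapsto k+1$, $m\mapsto l_0$ yields $l_0+k\le\bsdist(a,d_1)\le l_0+k+1$, which is exactly $k+L_0\le\bsdist(a,d_1)\le k+L_0+0+1$.

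\textbf{Inductive step.} Assume $(*)_{m-1}$ and $(**)_{m-1}$ hold. Given elements $a,d_0,\ldots,d_{m+1}$ satisfying $(*)_m$, the sub-configuration $a,d_0,\ldots,d_m$ satisfies $(*)_{m-1}$, so by the induction hypothesis $k+L_{m-1}\le\bsdist(a,d_m)\le k+L_{m-1}+m$. Now I apply Remark \ref{trianglebridge}$(2)'$ to the triple $x=a$, $y=d_m$, $z=d_{m+1}$: writing the bound on $\bsdist(a,d_m)$ in the form $k'\le\bsdist(a,d_m)\le l'-1$ with $k'=k+L_{m-1}$ and $l'=k+L_{m-1}+m+1$, and noting $k'\not\equiv l'\pmod n$ precisely because $l'-k'=m+1$ and $m+1<n$, together with $\bsdist(d_m,d_{m+1})=l_m$, I obtain $l_m+k'\le\bsdist(a,d_{m+1})\le l_m+l'$, i.e. $k+L_m\le\bsdist(a,d_{m+1})\le k+L_m+m+1$. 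This establishes $(*)_m$. For $(**)_m$, I would start from the construction realizing $(**)_{m-1}$ with $\bsdist(a,d_m)$ equal to any prescribed value $v$ in $[k+L_{m-1},\,k+L_{m-1}+m]$, then choose $d_{m+1}$ so that $\bsdist(d_m,d_{m+1})=l_m$; by the same $(2)'$-analysis, as $v$ ranges over its interval and (if finer control is needed) using the additional freedom in placing $d_{m+1}$ relative to $d_m$, the value $\bsdist(a,d_{m+1})$ sweeps the full interval $[k+L_m,\,k+L_m+m+1]$. One must check that each newly chosen $d_{m+1}$ can be taken with $\acl(d_{m+1})$ disjoint from the previously constructed closures; this is possible since $M_n$ is a circle with $n$ rotation-copies and $m+1<n$, so there is always room.

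\textbf{Main obstacle.} The routine part is the bound $(*)_m$, which is a clean induction feeding the previous step's output into $(2)'$; the only care needed there is bookkeeping the congruence condition $l'-k'=m+1\not\equiv 0\pmod n$, which is exactly what the hypothesis $m+1<n$ guarantees. The genuinely delicate part will be the ``moreover'' clause $(**)_m$: one must verify that the interval of achievable values for $\bsdist(a,d_{m+1})$ is \emph{exactly} $[k+L_m,\,k+L_m+m+1]$ with no gaps, which requires being explicit about how the slack of size $m+1$ accumulates — each application of $(2)'$ can add at most $1$ to the uncertainty, and one needs that these unit contributions are independently controllable by the placement of each $d_i$ within its arc. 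Making this precise amounts to an explicit coordinate description of the $d_i$'s on the circle (choosing, for each $i$, whether $g^{L_{i-1}+k+\cdots}(\cdot)$ lands exactly on a copy-boundary or strictly inside an arc), and then checking surjectivity onto the integer interval; I expect this to be the step that demands the most attention, though it remains elementary given Fact \ref{basicfact1} and Remark \ref{trianglebridge}.
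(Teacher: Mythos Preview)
Your proposal is correct and follows essentially the same route as the paper: induction on $m$, with Remark \ref{trianglebridge}$(2)'$ applied to the triple $(a,d_m,d_{m+1})$ for the bound, and the hypothesis $m+1<n$ used exactly to guarantee the congruence condition $k'\not\equiv l'\pmod n$. For the ``moreover'' clause the paper is slightly more explicit than your sketch: to hit a prescribed value $j\in[L_m,L_m+m+1]$ it splits into the two cases $j=L_m$ and $j>L_m$, invoking $(**)_{m-1}$ to realize $\bsdist(a,d_m)=k+(j-l_m)$ in the first case and $k+(j-l_m-1)$ in the second, and then placing $d_{m+1}$ in the appropriate arc relative to $g^{l_m}(d_m)$ and $g^{k+j}(a)$; but this is precisely the ``unit contributions are independently controllable'' mechanism you anticipated.
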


\begin{proof}
We show this using induction on $m$.
For $m=0$, by Remark \ref{trianglebridge}$(2)'$, it follows from $(*)_0$ that
$$k+l_0 \le\bsdist(a,d_1)\le k+l_0+1.$$
Moreover it is not hard to see $(**)_0$ holds.

Now assume the lemma holds for $m-1$ with $m+1<n$. Let us show the
lemma for $m$. For $i\le m+1$, $a, d_i \in M_n$ are given which satisfy $(*)_m$. 
 Firstly, by the induction hypothesis for $m-1$,
$$k+L_{m-1}\le\bsdist(a,d_m)\le k+L_{m-1}+m.$$
Since $m+1<n$,
 $$k+L_{m-1} \le \sdist(a,d_m) \le k+ L_{m-1} +m.$$
Then again by Remark \ref{trianglebridge}$(2)'$,
$$k+L_m\le \bsdist(a,d_{m+1})\le k+L_m+m+1.$$
Secondly, we show the moreover part. 
Fix $L_m \le j \le L_m+m+1$ and $a'\in M_n$. If $j=L_m$, then $j-l_m=L_{m-1}$ and due to the induction hypothesis, there are $d'_0,\ldots,d'_m$ that satisfy $(*)_{m-1}$ and
$$\bsdist(a',d'_m)=k+j-l_m.$$ 
So, $\bsdist(a',g^{l_m}(d'_m))=k+j$, and $M_n
\models  S\:(g^{l_m}(d'_{m}),d'_{m+1},g^{k+j+1}(a'))$ for some $d'_{m+1} \in M_n$.
Thus $$\bsdist(d'_m,d'_{m+1})=l_m,\ \bsdist(a',d'_{m+1})=k+j.$$ So, $a'$ and $d_i'$ for $i\le m+1$ satisfy the required condition. Now for $j>L_m$, the proof is similar to the case $j=L_m$ except that
we replace $j-l_m$ by $j-l_m-1$ and take $d'_{m+1}$ in $M_n$ such that
$$M_n \models S(g^{k+j}(a'),d'_{m+1},g^{l_m+1}(d'_{m})).$$
\end{proof}

Now, let $\CA(p_n)$ be the family of all the closed independent
functors in $p_n$. We follow the notation given at the
beginning of Section \ref{sec:h1}:  given a closed independent functor $f$ over $\emptyset$ in $p_n$ with $u=\{i_0<\cdots <i_k\}\in \dom(f)$, we write $f(u)=[a_0,\ldots,a_k]$, where $a_j \in M_n$, $f(u)=\acl(a_0,\ldots ,a_k)$, and $\acl(a_j)=f^{\{i_j\}}_u(\{i_j\})$. When we write $f(u)\equiv [b_0,\ldots,b_k]$, it of course means that $[a_0,\ldots,a_k]\equiv [b_0,\ldots,b_k]$. By Theorem \ref{weak}, it is equivalent to saying $a_0\cdots a_k\equiv b_0\cdots b_k$.

\begin{remark}\label{concatenatetriangles}
Let $\tau=\sum_{i=0}^m\limits \epsilon_i t_i$ ($t_i$ 2-simplex) be a chain-walk (in $p_n$) from $f_{01}$ to $-f_{02}$ such that $D_i=\supp(t_i)=\{0,k_i,k_{i+1}\}$ with $k_0=1$, $k_{m+1}=2$. Then putting together the {\em triangles} $t_0(D_0),\ldots,$ $t_m(D_m)$ side by side {\em centered at} $0$, we can find elements $a$ and $d_0,\ldots,d_{m+1}$ in $M_n$ such that for $0\le i\le m$,
    $$t_i(D_i)\equiv \begin{cases}
                    [a,d_i, d_{i+1}] & \text{if}~ k_i<k_{i+1}\\
                    [a,d_{i+1}, d_i] & \text{if}~ k_i>k_{i+1}.
                    \end{cases}$$
\end{remark}







Combining the classification results in Section \ref{sec:clsf} and Lemma \ref{trianglebridget}, we will show
 that there does not exist any finite upper bound for the minimal lengths of 2-chains with 1-shell boundaries in the types $p_n$.


%
%

\begin{theorem}\label{nobound}
Let $\CA$ be a non-trivial amenable collection and let $s$ be a 1-shell. Define $B(s)$, and $B(A)$ as follows:
\be
    \item $B(s) :=\min \{~|\tau|  :  \tau~\text{is a (minimal) 2-chain and}~\partial(\tau)=s ~\}$.
    
\noindent    (If $s$ is not the boundary of any $2$-chain,  define $B(s) := - \infty$.)
    
    \smallskip

    \item $B(\CA):=\max \{ B(s)\colon s~\text{is a 1-shell of}~\CA ~\}$.
\ee
Let $n>1$ and let $s=s_{12}-s_{02}+s_{01}$ be a 1-shell from $\CA(p_n)$ with $\supp(s_{ij})=\{i,j\}$. Then there are $a,b,c,c'$ in $M_n$ and some integers $k_1,k_2,k_3$ with $0\le k_i <n$  such that,
\be
	\item[$\bullet$] $\bsdist(a,c)=k_1$, $\bsdist(a,b)=k_2$, and $\bsdist(b,c')=k_3$;  
	
	\item[$\bullet$]  $s_{01}(\{0,1\})\equiv [a,c],s_{02}(\{0,2\})\equiv [a,b],$ and $s_{12}(\{1,2\})\equiv [c',b]$.
\ee
Let $0\leq k_4(< n)\equiv k_2-(k_1-k_3)$ (mod $n$) and let $$n_s:=\min\{ 2(n-k_4)-1,\; 2k_4+1\}.$$
Then $$B(s)=n_s.$$ Moreover, taking $k_1=0,$ $k_2=0$, and $k_3=[\frac{n}{2}]$, we get $n_s\ge n-1$ and $B(\CA(p_n))\ge n-1$. Therefore $\lim_{n\rightarrow\infty}\limits B(\CA(p_n))=\infty.$
\end{theorem}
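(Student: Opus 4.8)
The plan is to combine the classification of minimal $2$-chains with $1$-shell boundaries (Theorem~\ref{chain-type}) with the arithmetic of $\bsdist$ in $M_n$ (Lemma~\ref{trianglebridget}) to compute exactly the minimal length $B(s)$. The key observation is that a minimal $2$-chain $\tau$ with $\partial(\tau)=s$ can (after passing to an equivalent chain) be assumed to be centered at one of the three vertices $0,1,2$, and that such a centered chain is nothing but a chain-walk, i.e. a concatenation of triangles laid side by side around the center (Remark~\ref{concatenatetriangles}). Matching the outer two edges of such a concatenation to the prescribed $1$-simplices $s_{ij}$ is then exactly the question of when a prescribed $\bsdist$ can be realized by a chain of triangles, which is governed by $(**)_m$ of Lemma~\ref{trianglebridget}.

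\textbf{Step 1: Reduce to chain-walks.} First I would note that by Fact~\ref{basicfact1}, Theorem~\ref{weak}, and the fact that $p_n$ is a Lascar type (hence $H_1(p_n)=0$ by Theorem~\ref{h1=0}), every $1$-shell $s$ in $\CA(p_n)$ is a boundary, so $B(s)$ is a well-defined positive odd integer. The existence of $a,b,c,c'$ and the integers $k_1,k_2,k_3$ is immediate from the definition of a $1$-shell together with Fact~\ref{basicfact1}(3),(4) and the definition of $\bsdist$: the three $1$-simplices $s_{01},s_{02},s_{12}$ are determined up to conjugacy by three elements of $M_n$ with pairwise disjoint algebraic closures, and each pair's relative position is recorded by one $\bsdist$ value mod $n$. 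Then, given any minimal $\tau$ with $\partial(\tau)=s$, by Theorem~\ref{chain-type}(1) for the NR-type case and Theorem~\ref{chain-type}(2) for the RN-type case, $\tau$ is equivalent to a chain-walk from $f_{01}$ to $-f_{02}$ centered at $0$ (or, after relabeling the roles of the three vertices, centered at $1$ or at $2$); the equivalence preserves length by Remark/Definition~\ref{properchain}, so $|\tau|$ equals the length of that chain-walk.

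\textbf{Step 2: Translate centered chain-walks into $\bsdist$ equations.} By Remark~\ref{concatenatetriangles}, a chain-walk of length $2m+1$ centered at $0$ corresponds to elements $a,d_0,\dots,d_{m+1}$ with $d_0$ realizing the outer edge $s_{01}$-direction and $d_{m+1}$ the $s_{02}$-direction; the ``boundary-cancelling'' conditions (3) in Definition~\ref{chainwalk} force the intermediate $\bsdist(d_i,d_{i+1})$ to be arbitrary integers $l_i$, while the endpoint conditions force $\bsdist(a,d_0)$ and $\bsdist(a,d_{m+1})$ to be the prescribed values (up to the ambiguity recorded by $k_4$). Lemma~\ref{trianglebridget} says precisely that the set of realizable $\bsdist(a,d_{m+1})-\bsdist(a,d_0)$ for a walk of $m+1$ triangles is the integer interval $[\,L_m,\,L_m+m+1\,]$ as the $l_i$ range freely, hence (taking all residues mod $n$) every residue is realizable once $m+1\geq n$, and for smaller $m$ one can reach residue $r$ iff $r\in[0,m+1]$ or $r\in[n-m-1,n]$ modulo $n$. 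Spelling this out: a chain-walk of length $2m+1$ realizing the $1$-shell $s$ exists iff $k_4\le m$ or $n-k_4\le m$; centering at the other two vertices gives analogous conditions but, by Remark~\ref{trianglebridge}(1)$'$ and symmetry of the setup, these yield the same value $n_s=\min\{2(n-k_4)-1,\,2k_4+1\}$. Minimizing $2m+1$ over the allowed $m$ gives exactly $B(s)=n_s$.

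\textbf{Step 3: The unbounded family and the obstacle.} For the final assertion, take the particular $1$-shell with $k_1=k_2=0$ and $k_3=[\frac n2]$; then $k_4\equiv -[\frac n2]\pmod n$, so $k_4$ and $n-k_4$ are both about $n/2$, whence $n_s\geq n-1$, giving $B(\CA(p_n))\geq n-1$ and thus $\lim_{n\to\infty}B(\CA(p_n))=\infty$. I expect the main obstacle to be Step~2, specifically the \emph{lower} bound $B(s)\geq n_s$: the upper bound just requires exhibiting one chain-walk of length $n_s$, which Lemma~\ref{trianglebridget}'s ``moreover'' part does directly, but the lower bound requires showing that \emph{no} minimal $2$-chain of shorter length can have boundary $s$, and for this one genuinely needs the classification theorem to guarantee that \emph{every} minimal $2$-chain is (equivalent to) a centered chain-walk — so that the combinatorial interval constraint of Lemma~\ref{trianglebridget} is not just sufficient but necessary. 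Care is also needed to check that centering at vertex $1$ or $2$ (rather than $0$) does not secretly allow a shorter chain; this is where Remark~\ref{trianglebridge}(1)$'$ and the explicit symmetric form of $n_s$ in the $k_i$'s must be used to see all three centers give the same minimum.
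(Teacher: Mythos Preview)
Your overall strategy matches the paper's: reduce via Theorem~\ref{chain-type} to a chain-walk centered at $0$, translate via Remark~\ref{concatenatetriangles} into $\bsdist$-data, and apply Lemma~\ref{trianglebridget}. The upper bound $B(s)\le n_s$ and the final unboundedness claim go through as you sketch.

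There is, however, a real gap in your lower-bound argument. In Step~2 you say the chain-walk condition leaves the $l_i=\bsdist(d_i,d_{i+1})$ ``arbitrary'' and then invoke Lemma~\ref{trianglebridget} to constrain the reachable residues. But if the $l_i$ were truly free, then $L_m=\sum l_i$ would range over all integers and the interval $[k+L_m,\,k+L_m+m+1]$ would cover every residue mod $n$, giving no lower bound whatsoever. What you are missing is the constraint imposed by the \emph{full} boundary condition $\partial\tau=s$, not merely the chain-walk condition from $s_{01}$ to $-s_{02}$. The chain-walk axioms (Definition~\ref{chainwalk}(3)) only control the edges through the center vertex $0$; the \emph{outer} edges $\partial^0 t_i$ (those opposite $0$) must also cancel in $\partial\tau$, leaving a single surviving copy of $s_{12}$. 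The paper extracts from this: there is exactly one index $j_0$ with $\partial^0 t_{2j_0}=s_{12}$, forcing $l_{2j_0}=-k_3-1$, while the remaining outer edges cancel in pairs $\partial^0 t_{2j_1}=\partial^0 t_{2j_2+1}$, which by Remark~\ref{trianglebridge}$(1)'$ forces $l_{2j_1}+l_{2j_2+1}=-1$ for each such pair. Summing over the $2m+1$ terms gives $\sum_{j=0}^{2m}l_j=-k_3-m-1$, a \emph{fixed} value. Only with $L_{2m}$ pinned down does Lemma~\ref{trianglebridget} yield the inequality $k_1-k_3-m-1\le k_2\le k_1-k_3+m$, from which $2m+1\ge n_s$ follows. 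This outer-edge pairing is the heart of the lower bound, and your sketch does not contain it.

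A minor point: your worry in Step~3 that centering at vertex $1$ or $2$ might ``secretly allow a shorter chain'' is unnecessary. Theorem~\ref{chain-type} already says that \emph{every} minimal $2$-chain with boundary $s$ is equivalent, length-preservingly, to a chain-walk from $s_{01}$ to $-s_{02}$ centered at $0$; so analyzing the single center $0$ suffices, and no symmetry argument among the three vertices is needed.
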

\begin{proof}
(1) $B(s)\ge n_s$ : By Theorem \ref{h1=0} and Corollary
\ref{chain-type}, there is a chain-walk $\tau=\sum_{i=0}^{2m}\limits
(-1)^i t_i$ from $s_{01}$ to $-s_{02}$
and $\Bd(\tau)=s$. We want to show $|\tau|\ge n_s$. Suppose not,  i.e., $|\tau|=2m+1 <n-1$. By Remark \ref{concatenatetriangles}, there are $d_i$'s ($0\le i\le 2m+1$) in $M_n$ such that $ac\equiv ad_0,~d_{2m+1}=b$; 
and
\be
    \item[$\bullet$] $\bsdist(d_0,d_1)=l_0,\ \bsdist(d_{2m-1},d_{2m})=l_{2m}$ for some integers
    $l_i$;

    \item[$\bullet$] $t_0(\{0,k_0,k_1\})\equiv [a,d_0,d_1]$, $t_{2j-1}(\{0,k_{2j-1},k_{2j}\})\equiv [a,d_{2j},d_{2j-1}]$, and $t_{2j}(\{0,k_{2j},k_{2j+1}\})\equiv[a,d_{2j},d_{2j+1}]$ for $1\le j\le m$. 
\ee
Now $\Bd\tau=s$ implies $\Bd^0 t_{2j_0}=s_{12}$ for some $0 \le 2j_0\le 2m$; and for any $0\le j_1\neq j_0 \le m$ there is $0\le j_2\neq j_0 \le m$ (indeed a bijection) such that $\Bd^0 t_{2j_1}=\Bd^0 t_{2j_2+1}$. So
\be
	\item[$\bullet$] $\bsdist(d_{2j_0},d_{2j_0 +1})=-k_3-1$; and
	
	\item[$\bullet$] $[d_{2j_1},d_{2j_1+1}]\equiv [d_{2j_2+2},d_{2j_2+1}]$.
\ee
By Remark \ref{trianglebridge}$(1)'$, $\bsdist(d_{2j_1},d_{2j_1 +1})=-\bsdist(d_{2j_2},d_{2j_2 +1})-1$. Therefore $l_{2j_0}=-k_3-1$ and $l_{2j_2+1}=-l_{2j_1}-1$, so $\sum_{j=0}^{2m}\limits l_j=-k_3-m-1$. Hence due to Lemma \ref{trianglebridget} and $2m+1<n-1$, we have $k_1-k_3-m-1\le \bsdist(a,b)\le k_1-k_3+m$. Thus
$$\bsdist(a,b)=k_2;\mbox{ and}\ k_1-k_3-m-1\le \bsdist(a,b)\le k_1-k_3+m.$$
We rewrite it as
$$\bsdist(g^{k_1-k_3}(a),b)=k_2-(k_1-k_3);\mbox{ and}\ -m-1\le \bsdist(g^{k_1-k_3}(a),b)\le m.$$
We can replace $k_2-(k_1-k_3)$ by $k_4$ and we have $n-(m+1)<k_4+1$ or $m+1>k_4$. In either case, we have $m\ge \min \{n-k_4-1,\; k_4\}$. Therefore $2m+1\ge 2\min \{n-k_4-1,\; k_4\} +1=\min \{2(n-k_4)-1,\; 2k_4+1 \}=n_s$, a contradiction. We have proved  $B(s)\ge n_s$.

(2) $B(s)\le n_s$ : We construct a chain-walk 
$\gamma=\sum_{i=0}^{n_s-1}\limits r_i$ with $\supp(\gamma)=\{0,1,2\}$ and $\Bd \gamma=s$ as follows: Note that
since $n_s$ is odd, $m_s:=(n_s-1)/{2}$ is an integer. Also note that,
if we let $N_1:=k_1-k_3-m_s-1$ and $N_2:=k_1-k_3+m_s$, then  $k_2
\equiv N_i \pmod n \ (i=1 \mbox{ or }2).$ Hence we have $ \bsdist(a,b)=N_1$ or $\bsdist(a,b)=N_2$.
Applying Lemma \ref{trianglebridget} with $k_1$ and $l_0,\ldots,l_{2m_s}$ such that $l_{2i+1}=-l_{2i}-1$
for $0\leq  i< m_s$ and $l_{2m_s}=-k_3-1$, we obtain $\sum_{i=0}^{2m_s}\limits l_i=L_{2m_s}= -m_s-k_3-1$, and 
$L_{2m_s} +2m_s+1=m_s-k_3$. Therefore if $j$ is chosen to be such that
$j=N_1-k_1$ or $=N_2-k_1$, and by applying $(**)_{2m_s}$ in Lemma \ref{trianglebridget}, we can
find  $d'_0,\ldots,d'_{2m_s+1}(=d'_{n_s})$ such that 
$$\bsdist(a,d'_0)=k_1,\ \bsdist(d'_i,d'_{i+1})=l_i\mbox{ for } 0\le i\le 2m_s,\ \mbox{and } \bsdist(a,d'_{n_s})=k_2.$$ 
Then due to Fact \ref{basicfact1}(4) and Remark \ref{trianglebridge}
$(1)'$, it follows that 
$ad'_0\equiv ac,~ad'_{n_s}\equiv ab,~d'_{n_s-1}d'_{n_s}\equiv c'b$ and $d'_{2i}d'_{2i+1}\equiv
d'_{2i+1}d'_{2i+2}$ for $0\leq i< m_s$.
Hence clearly we have a desired 2-chain $\gamma=\sum_{i=0}^{n_s-1}\limits r_i$ such that 
$$r_i(\{0,1,2\})\equiv
	\begin{cases}
	[a,d'_i,d'_{i+1}] & \text{if}~i\equiv 0\pmod 2\\
	[a,d'_{i+1},d'_i] & \text{if}~i\equiv 1\pmod 2.
	\end{cases}$$
\end{proof}

\begin{corollary}\label{no3weak}
For each $n\ge 5$, $\CA(p_n)$ does not have weak 3-amalgamation.
\end{corollary}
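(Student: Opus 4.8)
The plan is to combine Theorem \ref{nobound} with the definition of weak $3$-amalgamation and the homological characterization given by Fact \ref{funfact}. Recall that $\CA$ has weak $3$-amalgamation precisely when every $1$-shell is the boundary of some $2$-chain $c$ with $|c|\le 3$; in other words, in the notation of Theorem \ref{nobound}, weak $3$-amalgamation of $\CA(p_n)$ is equivalent to $B(\CA(p_n))\le 3$. So it suffices to exhibit, for each $n\ge 5$, a single $1$-shell $s$ of $\CA(p_n)$ with $B(s)>3$, i.e.\ $n_s\ge 5$.

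First I would invoke the ``Moreover'' clause of Theorem \ref{nobound}: taking the parameters $k_1=0$, $k_2=0$, $k_3=[\tfrac n2]$ produces a $1$-shell $s$ with $n_s\ge n-1$. Hence for $n\ge 6$ we immediately get $n_s\ge n-1\ge 5$, so $B(s)=n_s\ge 5>3$ and $\CA(p_n)$ fails weak $3$-amalgamation. For the remaining case $n=5$, the bound $n_s\ge n-1=4$ is not quite enough on its face, but $n_s$ is always an \emph{odd} integer (being $\min\{2(n-k_4)-1,\,2k_4+1\}$), so $n_s\ge 4$ forces $n_s\ge 5$; alternatively one chooses the parameters so that $k_4=2$, giving $n_s=\min\{2(5-2)-1,\,2\cdot 2+1\}=\min\{5,5\}=5$ directly. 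Either way $B(s)=5>3$, so $\CA(p_5)$ also lacks weak $3$-amalgamation.

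I do not expect a genuine obstacle here: the Corollary is essentially a bookkeeping consequence of Theorem \ref{nobound}. The only point requiring a small amount of care is the boundary case $n=5$, where one must either use the parity of $n_s$ or pick the free parameters $k_1,k_2,k_3$ so that the two candidate values in the definition of $n_s$ are both at least $5$ — both routes are immediate from the explicit formula. One should also note, for cleanliness, that the $1$-shells produced by Theorem \ref{nobound} genuinely arise in $\CA(p_n)$ (they do, since $p_n$ is the unique $1$-type and is a Lascar type by Fact \ref{basicfact2}, so Theorem \ref{h1=0} guarantees each such $1$-shell is a boundary and hence $B(s)$ is a finite positive odd number, not $-\infty$).
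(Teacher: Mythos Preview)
Your argument is correct and follows the intended route: the paper states the corollary without proof, as an immediate consequence of Theorem \ref{nobound}. Your only inefficiency is the separate treatment of $n=5$. Weak $3$-amalgamation requires $B(s)\le 3$ for every $1$-shell $s$, so the single inequality $B(\CA(p_n))\ge n-1\ge 4$ from the ``Moreover'' clause already disposes of all $n\ge 5$ at once; there is no need to push $n_s$ up to $5$ via parity or a direct computation of $k_4$.
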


\end{document}